\def\C{\ensuremath{\mathbb{C}}}
\def\H{\ensuremath{\mathbb{H}}}
\def\Q{\ensuremath{\mathbb{Q}}}
\def\R{\ensuremath{\mathbb{R}}}
\def\Z{\ensuremath{\mathbb{Z}}}
\def\dim{\mathop{\mathrm{dim}}\nolimits}
\def\inf{\mathop{\mathrm{inf}}\nolimits}
\def\Ext{\mathop{\mathrm{Ext}}\nolimits}
\def\GL{\mathop{\mathrm{GL}}\nolimits}
\def\HN{\mathop{\mathrm{HN}}\nolimits}
\def\Hom{\mathop{\mathrm{Hom}}\nolimits}
\def\Ker{\mathop{\mathrm{Ker}}\nolimits}
\def\rk{\mathop{\mathrm{rk}}}
\def\Stab{\mathop{\mathrm{Stab}}\nolimits}
\def\Slice{\mathop{\mathrm{Slice}}\nolimits}
\def\into{\ensuremath{\hookrightarrow}}
\def\onto{\ensuremath{\twoheadrightarrow}}
\def\blank{\underline{\hphantom{A}}}
\def\wGL{\widetilde{\GL}_2^+(\R)}
\def\abs#1{\left\lvert#1\right\rvert}
\def\norm#1{\left\|#1\right\|}
\newcommand\stv[2]{\left\{#1\,\colon\,#2\right\}}
\newtheorem*{rep@theorem}{\rep@title}
\newcommand{\newreptheorem}[2]{%
\newenvironment{rep#1}[1]{%
 \def\rep@title{#2 \ref{##1}}%
 \begin{rep@theorem}}%
 {\end{rep@theorem}}}
\newtheorem{Thm}{Theorem}[section]
\newtheorem{Prop}[Thm]{Proposition}
\newtheorem{Lem}[Thm]{Lemma}
\newtheorem{Cor}[Thm]{Corollary}
\newtheorem{Ass}[Thm]{Assumption}
\newtheorem{thm-int}{Theorem}
\theoremstyle{definition}
\newtheorem{Def-s}[Thm]{Definition}
\newtheorem{Def}[Thm]{Definition}
\newtheorem{Rem}[Thm]{Remark}
\def\C{\ensuremath{\mathbb{C}}}
\def\H{\ensuremath{\mathbb{H}}}
\def\Q{\ensuremath{\mathbb{Q}}}
\def\R{\ensuremath{\mathbb{R}}}
\def\Z{\ensuremath{\mathbb{Z}}}
\def\cA{\ensuremath{\mathcal A}}
\def\cD{\ensuremath{\mathcal D}}
\def\cP{\ensuremath{\mathcal P}}
\def\cS{\ensuremath{\mathcal S}}
\def\cQ{\ensuremath{\mathcal Q}}
\def\cZ{\ensuremath{\mathcal Z}}
\begin{document}

\title[A short proof]{A short proof of the deformation property of Bridgeland stability
conditions}

\author{Arend Bayer}
\address{School of Mathematics and Maxwell Institute,
University of Edinburgh,
James Clerk Max\-well Building,
Peter Guthrie Tait Road, Edinburgh, EH9 3FD,
United Kingdom}
\email{arend.bayer@ed.ac.uk}
\urladdr{http://www.maths.ed.ac.uk/~abayer/}

\keywords{Bridgeland stability conditions, Derived category, Wall-crossing}

\begin{abstract}
The key result in the theory of Bridgeland stability conditions is the property that they form a
complex manifold. This comes from the fact that given any small deformation of the central charge, there is a
unique way to correspondingly deform the stability condition.

We give a short direct proof of an effective version of this deformation property.
\end{abstract}


\maketitle
\setcounter{tocdepth}{1}
\tableofcontents

\section{Introduction}

Stability conditions on triangulated categories,  introduced in \cite{Bridgeland:Stab}, have
been hugely influential, due to their connections to physics \cite{Tom-Ivan:quadratic,GMN:WKB}, to
mirror symmetry \cite{Bridgeland:spaces} and to representation theory
\cite{Anno-Bezrukavnikov-Mirkovic:stability}, and due to their applications within algebraic
geometry, for example
to Donaldson-Thomas invariants \cite{Yukinobu:DTsurvey},
to the derived category itself \cite{Daniel:intro-stability, K3Pic1}, or to the birational geometry of moduli spaces
\cite{ABCH:MMP, BM:walls, wallcrossing-BrillNoether, Chunyi-Xiaolei:birational, Emolo-Benjamin:lecture-notes}.

Their distinguishing property, crucial in all applications, is a strong deformation
property: by \cite[Theorem 1.2]{Bridgeland:Stab}, there is a complex manifold of stability
conditions, with a map to a vector space that is a local  isomorphism. We
give a short proof of an effective version of this result.

\subsection*{Result}
We refer to Section \ref{sec:review} for complete definitions; here we 
review notation and the support property. Let $\cD$ be a triangulated category, and let
$v \colon K(\cD) \to \Lambda$ be a homomorphism from its K-group to a finitely generated free
abelian group $\Lambda$. A pre-stability condition on $\cD$ with respect to $v$ is a pair
$\sigma = (Z, \cP)$, where
$\cP$ is a \emph{slicing} (see Definition \ref{def:slicing}) and 
$Z \colon \Lambda \to \C$ is a compatible (see Definition \ref{def:prestability}) group homomorphism.

\begin{Def}[{\cite{Bridgeland:Stab}, \cite{Kontsevich-Soibelman:stability}}]
\label{def:supportproperty}
Let $Q \colon \Lambda_\R \to \R$ be a quadratic form. We say that a pre-stability condition $(Z,
\cP)$ satisfies the support property with respect to $Q$ if 
\begin{enumerate*}
\item \label{item:QKerneg} 
the kernel $\Ker Z \subset \Lambda_\R$ of the central charge is negative definite with respect to $Q$, and
\item \label{item:QEnonneg}
for any semistable object $E$, i.e. $E \in \cP(\phi)$ for some $\phi \in \R$, we have
$Q(v(E)) \ge 0$.
\end{enumerate*}
\end{Def}
In this case, we call $\sigma$ a stability condition. Let $\Stab_\Lambda(\cD)$ denote the
topological space (see Section \ref{sec:review}) of stability conditions, and $\cZ \colon \Stab_\Lambda(\cD) \to \Hom(\Lambda, \C)$ the map
$\cZ(Z, \cP) = Z$. 

\begin{Thm} \label{thm:mainthm}
Let $Q$ be a quadratic form on $\Lambda \otimes \R$, and assume that the stability condition
$\sigma = (Z, \cP)$ satisfies the support property with respect to $Q$. Then:
\begin{enumerate*}
\item \label{item:deformeffective}
There is an open neighbourhood $\sigma \in U_\sigma \subset \Stab_\Lambda(\cD)$ such that the
restriction $\cZ \colon U_\sigma \to \Hom(\Lambda, \C)$ is a covering of the set of 
$Z'$ such that $Q$ is negative definite on $\Ker Z'$.
\item \label{item:Qremains}
All stability conditions in $U_\sigma$ satisfy the support property with respect to $Q$.
\end{enumerate*}
\end{Thm}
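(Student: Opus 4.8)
The standard strategy (going back to Bridgeland) is to deform $Z$ linearly and show the slicing can be dragged along. Write the target central charge as $Z' = Z + W$ for small $W \in \Hom(\Lambda,\C)$, and for $t \in [0,1]$ set $Z_t = Z + tW$. One wants to show that the stability condition $\sigma_t = (Z_t, \cP_t)$ can be defined for all $t$, with $\cP_t$ varying continuously, $\sigma_0 = \sigma$, and each $\sigma_t$ still satisfying the support property with respect to $Q$; this simultaneously gives local homeomorphism onto the locus where $Q$ is negative definite on $\ker Z'$ and the persistence of the support property. The key quantitative input is the support property itself: it forces a lower bound on $|Z(v(E))| / \|v(E)\|$ for semistable $E$ (where $\|\cdot\|$ is any norm on $\Lambda_\R$), because $Q(v(E)) \ge 0$ together with $\ker Z$ negative definite means $v(E)$ stays a uniformly bounded angle away from $\ker Z$. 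Hence for $\|W\|$ small enough, $|Z_t(v(E))| \ge c\|v(E)\|$ uniformly in $t$ and over all $\sigma$-semistable $E$ — the central charges of semistable objects never approach $0$ along the path.

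**Key steps.** First I would record the geometric consequence of the support property: there are constants $C > 0$ such that $|Z(v(E))| \ge C \|v(E)\|$ for every $\sigma$-semistable $E$, and more generally for every object $E$ with $v(E)$ in the cone $\{Q \ge 0\}$ (using that this cone meets $\ker Z$ only at $0$, by compactness of the unit sphere). Second, I would set up the deformation: for the linear path $Z_t$, define $\cP_t$ by declaring the $\sigma$-semistable objects to remain semistable but with phases rotated according to $Z_t$ — concretely, an object $E \in \cP(\phi)$ should land in $\cP_t(\phi_t(E))$ where $e^{i\pi\phi_t(E)} \in \R_{>0} \cdot Z_t(v(E))$, choosing the branch continuously from $t = 0$. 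The crucial point is that this prescription is consistent, i.e. extends to a genuine slicing: one must check that the Harrison–Noether–Harder–Narasimhan filtrations are preserved, which amounts to showing no new destabilizing subobjects appear. This is where one uses that the subcategory $\cP((\phi - 1, \phi])$ — the heart of a bounded t-structure — stays fixed, or only changes by a controlled tilt, and that the phase ordering is preserved because $W$ is small relative to the gap $C$ between $Z_t(v(E))$ and $0$. Third, I would verify the support property persists: since $v(E)$ ranges over a fixed set (the classes of $\sigma$-semistable objects, which by step one all lie in $\{Q \ge 0\}$) and the semistable objects of $\sigma_t$ are among these, $Q(v(E)) \ge 0$ still holds; and $\ker Z_t$ is negative definite by the assumption on $Z'$ along the path (one can connect any $Z'$ in the negative-definite locus to $Z$ by a path staying in that locus, since it is convex-ish — or at least connected — on each component). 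Fourth, continuity and the covering statement: the map $\cZ$ is a local homeomorphism by Bridgeland's deformation criterion (the metric on $\Stab$ controls both $Z$ and phases), and surjectivity onto the negative-definite locus follows from running the path construction; the covering (rather than homeomorphism) reflects the monodromy one can pick up when the target locus is not simply connected.

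**Main obstacle.** The hard part is the consistency check in step two: proving that the naive phase-reassignment actually produces a slicing — equivalently, that the set of $\sigma_t$-semistable objects is well-defined and that every object has an HN filtration with respect to it. The subtlety is that as phases rotate at different rates (depending on $v(E)$), the linear order on phases can in principle be violated, creating circular destabilization. The resolution is the uniform lower bound from the support property: because $|Z_t(v(E))|$ is bounded below by $C\|v(E)\|$ and $|W(v(E))| \le \|W\| \cdot \|v(E)\|$, the phase of each semistable object moves by at most roughly $\|W\|/C$, so for $\|W\| < C/2$ (say) any two objects whose phases were separated stay separated in the same order, and any HN filtration is only perturbed within a narrow band. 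One then argues — this is the technical heart — that an object destabilizing $E$ for $\sigma_t$ would, by the phase bound, also have destabilized $E$ (or a nearby object) for $\sigma$, contradiction; combined with the local-finiteness built into the definition of a stability condition, this yields existence of HN filtrations for $\sigma_t$. Boundedness of the set of semistable classes — needed to make "uniform $C$" meaningful — follows from the support property as well, as it confines $v(E)$ to a region where, after fixing $Z_t(v(E))$ up to positive scalar, $\|v(E)\|$ is controlled.
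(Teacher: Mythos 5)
There is a genuine gap, and it is fundamental. The central step in your plan — ``define $\cP_t$ by declaring the $\sigma$-semistable objects to remain semistable but with phases rotated according to $Z_t$'' — is incorrect. This is precisely what does \emph{not} happen: wall-crossing occurs arbitrarily close to $\sigma$, so some $\sigma$-semistable objects become strictly unstable for $\sigma_t$, and objects that were unstable acquire new HN filtrations whose factors were not $\sigma$-semistable to begin with. Your ``consistency check'' (no new destabilizing subobjects, an object destabilizing for $\sigma_t$ must already have destabilized for $\sigma$) is false for the same reason, and it is exactly the point where Bridgeland's original proof has to introduce the quasi-abelian machinery and the $\frac18$ estimate. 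Calling it ``the technical heart'' and stopping there means the proof is not there. The uniform lower bound $|Z_t(v(E))| \ge c\|v(E)\|$ controls the phase \emph{of a fixed class}, but it does not by itself rule out new destabilizing subobjects or give existence of HN filtrations for $Z_t$; you still need a mechanism to produce maximally destabilizing subobjects.

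Your argument for persistence of the support property inherits the same error: you claim ``the semistable objects of $\sigma_t$ are among [the $\sigma$-semistable objects],'' which is false. The paper proves this non-trivially: Lemma~\ref{lem:Qpreserved} shows $Q\ge 0$ propagates through a Jordan--H\"older filtration because the central charges of J--H factors are aligned (so $|Z|$ is additive there), and the full statement needs an induction over walls together with a finiteness argument (via the truncated HN polygon) to ensure the induction terminates.

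The paper's approach also sidesteps your main obstacle entirely, and this is worth internalizing. Using the $\widetilde{\GL_2^+(\R)}$-action it normalizes $Q(v)=|Z(v)|^2-\|p(v)\|^2$ (Lemma~\ref{lem:coords}), parametrizes nearby central charges as $Z'=Z+u\circ p$ (Lemma~\ref{lem:deformincoords}), and then reduces to the case where $u$ is \emph{real} (Lemma~\ref{lem:onlyreal}). In that case $\Im Z$ is unchanged, so the heart $\cA=\cP(0,1]$ stays fixed, and one only needs to show that the deformed $Z_1$ is a stability function on $\cA$ with the HN property (Proposition~\ref{prop:stabviaheart}). Existence of HN filtrations is then pure convex geometry of the Harder--Narasimhan polygon (Proposition~\ref{prop:HNviapoly}, Corollary~\ref{cor:HNfromfinite}): the support property bounds $\|p(A)\|$ by the mass $m^Z(A)$, which combined with convexity pins $v(A)$ to a compact region for any potential destabilizer $A\subset E$. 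This is both the content and the chief simplification of the paper — it avoids the slicing-deformation analysis you were gesturing at. Your proposal lacks this reduction and so would have to reproduce Bridgeland's harder original argument, which you have not done.

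Finally, you do not address the reduction from a general quadratic form to a non-degenerate one of signature $(2,\rk\Lambda-2)$, which the paper needs (Section 6) before the normalization $Q=|Z|^2-\|p\|^2$ makes sense.
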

In other words, $\Stab_\Lambda(\cD)$ is a manifold, and any path
$Z_t \in \Hom(\Lambda, \C)$ for $t \in [0,1]$ with $Z_0 = Z$ and
$\Ker Z_t$ negative definite for all $t \in [0,1]$ lifts uniquely to a continuous path
$\sigma_t = (Z_t, \cP_t)$ in the space of stability conditions starting at $\sigma_0 = \sigma$.

Part \eqref{item:deformeffective} is an effective variant of \cite[Theorem 1.2]{Bridgeland:Stab}
(which says that there is \emph{some} neighbourhood of $Z_0$ in which paths can be lifted uniquely). The
entire result first appeared as \cite[Proposition A.5]{BMS:stabCY3s} with an indirect proof based on reduction to \cite[Theorem 1.2]{Bridgeland:Stab}.

\subsection*{Remarks} 
The support property can be a deep and interesting property in itself:
a quadratic Bogomolov-Gieseker type inequality for Chern classes of semistable objects which, by
Theorem \ref{thm:mainthm}, is preserved under wall-crossing.

Theorem \ref{thm:mainthm} was crucial in \cite{BMS:stabCY3s} in order to describe an entire
component of the space of stability conditions on abelian threefolds, and on some Calabi-Yau
threefolds. It also greatly simplifies the
construction of stability conditions on surfaces (or of \emph{tilt-stability} on higher-dimensional
varieties \cite{BMT:3folds-BG}). In this case, the quadratic form
$Q$ is the classical Bogomolov-Gieseker inequality, and 
Theorem \ref{thm:mainthm} gives an open subset of stability conditions that
otherwise has to be glued together from many small pieces (see e.g.~\cite[Section 4]{localP2}).

Theorem 1.2 of \cite{Bridgeland:Stab} also allows for components of the space of stability
conditions modelled on a linear subspace $L \subset \Hom(\Lambda, \C)$. When $L$ is defined over $\Q$, we can recover
that statement by replacing $\Lambda$ with $\Lambda/\Ker L$.
(See \cite{Sven-Holger:quotcategories} for examples where this is not satisfied; however, 
to achieve well-behaved wall-crossing one has to assume that $L$ is defined over $\Q$.)

\subsubsection*{Proof idea}
Our proof is based on two ideas. First, we reduce to the case where the imaginary part of $Z$
is constant; then we only have to deform stability in a fixed abelian
category. Secondly, we use the elementary convex geometry of the \emph{Harder-Narasimhan
polygon}, see Section \ref{sec:HNpolygon}.

This avoids the need for \emph{quasi-abelian categories}, and some of the 
more technical arguments of \cite[Section 7]{Bridgeland:Stab}. We still need a few arguments
similar to ones in \cite{Bridgeland:Stab}, which we have reproduced for the convenience of the reader.

\subsection*{Application}
Assume that $\cD$ is a 2-Calabi-Yau category, i.e. for all $E, F \in \cD$ we have a bi-functorial
isomorphism
$\Hom(E, F) = \Hom(F, E[2])^\vee$. Let $\Lambda$ be the \emph{numerical
$K$-group} of $\cD$, and assume that $\Lambda$ is finitely generated. Then
there is a surjection $v \colon K(\cD) \to \Lambda$, and $\Lambda$ admits a non-degenerate bilinear form $(\blank, \blank)$, called Mukai-pairing, with
\[ \chi(E, F) = -\bigl(v(E), v(F) \bigr). \]

Let $\cP_0(\cD) \subset \Hom(\Lambda, \C)$ be the set of central charges $Z$ such that
$\Ker Z$ is negative definite with respect to the Mukai pairing, and such that $\Ker Z$ contains
no root $\delta \in \Lambda, (\delta, \delta) = -2$. 

\begin{Cor}
\label{cor:P0covering}
The restriction
$ \cZ^{-1}\left(\cP_0(\cD)\right) \xrightarrow{\cZ} \cP_0(\cD) $
is a covering map.
\end{Cor}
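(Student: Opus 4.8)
The plan is to deduce Corollary \ref{cor:P0covering} from Theorem \ref{thm:mainthm} by producing, near each point of $\cZ^{-1}(\cP_0(\cD))$, a suitable quadratic form $Q$ to which the theorem applies. The issue is that Theorem \ref{thm:mainthm} is a local statement requiring a fixed $Q$, whereas $\cP_0(\cD)$ is defined using the Mukai pairing together with the open condition that $\Ker Z$ avoids all $(-2)$-roots; the Mukai pairing itself is \emph{not} negative semidefinite on all of $\Lambda_\R$, so it cannot serve directly as the support-property form for a stability condition.

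First I would recall that, by the standard theory (Mukai, and its extension to $2$-Calabi--Yau categories; cf.\ the arguments in \cite{Bridgeland:spaces}), in a $2$-Calabi--Yau category a spherical object $S$ — one with $v(S) = \delta$, $(\delta,\delta)=-2$ — controls the only way the Mukai pairing can fail to be positive on $\Ker Z$ for a would-be stability condition, and away from such classes one can always find a quadratic form satisfying the support property. Concretely, fix $\sigma_0 = (Z_0,\cP_0) \in \cZ^{-1}(\cP_0(\cD))$. Since $Z_0 \in \cP_0(\cD)$, $\Ker Z_0$ is negative definite for the Mukai pairing $(\blank,\blank)$ and contains no $(-2)$-root; the latter means the finitely many $(-2)$-roots $\delta$ all have $Z_0(\delta) \neq 0$. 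I would then set $Q := -(\blank,\blank) + C\,|Z_0(\blank)|^2$ on $\Lambda_\R$ for a large real constant $C$: the term $|Z_0(\blank)|^2$ is a positive semidefinite quadratic form vanishing exactly on $\Ker Z_0$, so for $C \gg 0$ the form $Q$ becomes positive definite on a complement of $\Ker Z_0$ while staying negative definite on $\Ker Z_0$ — i.e.\ $\Ker Z_0$ is negative definite for $Q$, giving property \eqref{item:QKerneg}. For the support-property bound \eqref{item:QEnonneg}: if $E$ is $\sigma_0$-semistable then $Q(v(E)) \geq 0$ holds automatically unless $(v(E),v(E)) > 0$, i.e.\ $v(E)^2 \geq 2$ (a "large" class), while $(v(E),v(E)) \in \{-2,0\} \cup \Z_{\geq 2}$ in any case; the only danger is $(v(E),v(E)) = -2$, where I must use that a $\sigma_0$-semistable $E$ with a $(-2)$-class has $Z_0(v(E)) \neq 0$ (since $(-2)$-classes are not in $\Ker Z_0$), hence $C|Z_0(v(E))|^2 > 0$, and choosing $C$ large enough relative to the finitely many $(-2)$-roots forces $Q(v(E)) \geq 0$. (Here one also uses that $\sigma_0$-semistable objects with a fixed class $v$ exist only for $Z_0(v) \neq 0$ or $v$ in the finitely many problematic directions, a standard finiteness input.) Thus $\sigma_0$ satisfies the support property with respect to this $Q$.

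Now apply Theorem \ref{thm:mainthm}: there is an open neighbourhood $U_{\sigma_0} \subset \Stab_\Lambda(\cD)$ of $\sigma_0$ on which $\cZ$ is a covering of the open set $V_{Q} := \{Z' : Q \text{ negative definite on } \Ker Z'\}$, and every $\sigma \in U_{\sigma_0}$ again satisfies the support property with respect to $Q$. I would then observe that $Z_0 \in V_Q \cap \cP_0(\cD)$, and that $V_Q$ is an open neighbourhood of $Z_0$ contained in $\{Z' : \Ker Z' \text{ negative definite for } (\blank,\blank)\}$ — because $Q = -(\blank,\blank) + C|Z_0|^2$ and on a small enough neighbourhood of $Z_0$ one has $\Ker Z' \cap \Ker Z_0 = \Ker Z' $ "to first order", or more robustly: negative definiteness of $Q$ on $\Ker Z'$ together with the bounded perturbation $C|Z_0|^2$ forces $(\blank,\blank)$ to be negative definite on $\Ker Z'$ once $Z'$ is close to $Z_0$, since $Z_0$ is close to non-degenerate on $\Ker Z'$. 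Restricting the covering $\cZ\colon U_{\sigma_0} \to V_Q$ to the open subset $\cP_0(\cD) \cap V_Q$, and intersecting the source with $\cZ^{-1}(\cP_0(\cD))$, realizes $\cZ^{-1}(\cP_0(\cD)) \to \cP_0(\cD)$ as a covering \emph{locally near $\sigma_0$}; since $\sigma_0$ was arbitrary and the covering property is local on the base (local triviality can be checked in a neighbourhood of each point of $\cP_0(\cD)$, pulling back along the finitely many sheets supplied by the various $U_{\sigma_0}$ lying over that point — note $\cZ^{-1}(\cP_0(\cD))$ is, by Theorem \ref{thm:mainthm}, a covering space over a connected-enough base so the number of sheets is locally constant), we conclude $\cZ^{-1}(\cP_0(\cD)) \to \cP_0(\cD)$ is a covering map.

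The main obstacle is the global patching step: Theorem \ref{thm:mainthm} hands us, for each $\sigma_0$, a \emph{different} quadratic form $Q_{\sigma_0}$ and a neighbourhood $V_{Q_{\sigma_0}}$ which is generally a \emph{proper} subset of $\cP_0(\cD)$, so one must argue that the locally-defined coverings $\cZ^{-1}(V_{Q_{\sigma_0}}) \to V_{Q_{\sigma_0}}$ are compatible and assemble into a covering of all of $\cP_0(\cD)$. The key point that makes this work is that "being a covering map" is local on the target: it suffices that every point $z \in \cP_0(\cD)$ has an evenly-covered neighbourhood, and such a neighbourhood is furnished by $V_{Q_{\sigma_0}}$ for any $\sigma_0 \in \cZ^{-1}(z)$ (all points of a fixed fibre, being finite in number by the covering property over $V_{Q_{\sigma_0}}$, can be handled by intersecting the finitely many associated $V$'s). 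One must also check that the choice of $C$ can be made to work uniformly enough for this; since the set of $(-2)$-roots of $\Lambda$ relevant near $z$ is finite and $\cP_0(\cD)$ forces $Z_0(\delta) \neq 0$ for all of them, a single large $C$ does the job in a neighbourhood of $z$, and no genuine obstruction arises.
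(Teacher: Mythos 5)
Your high-level strategy matches the paper's: for each $\sigma_0$ lying over $\cP_0(\cD)$, build a quadratic form $Q$ from the Mukai pairing plus a correction term depending only on $Z_0$, check the support property, apply Theorem~\ref{thm:mainthm}, and glue. However, the execution has two genuine errors.

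First, a sign error in the construction of $Q$. By definition of $\cP_0(\cD)$, the Mukai pairing $(\blank,\blank)$ is already \emph{negative} definite on $\Ker Z_0$, and the support property requires $Q$ to be negative definite on $\Ker Z_0$. Since $|Z_0(\blank)|^2$ vanishes identically on $\Ker Z_0$, the correct choice is $Q = (\blank,\blank) + C\,|Z_0(\blank)|^2$ with a \emph{plus} sign on the Mukai pairing, as in the paper (with $C = 2/C'^2$). Your $Q := -(\blank,\blank) + C\,|Z_0(\blank)|^2$ is \emph{positive} definite on $\Ker Z_0$, which violates item~\eqref{item:QKerneg} outright; the sign error also propagates into your case analysis of $Q(v(E)) \ge 0$ — with the correct sign, the case $(v(E),v(E)) \ge 0$ is automatic and the case $(v(E),v(E)) = -2$ is what requires the correction term, exactly the opposite of what you write.

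Second, you repeatedly invoke ``the finitely many $(-2)$-roots,'' but in general $\Lambda$ contains infinitely many such classes (already for K3 surfaces: $(1, D, 1 + D^2/2)$ is a root for every $D \in \NS(X)$). The statement that actually suffices, and is what the paper proves, is that $C := \inf\{|Z_0(\delta)| : \delta^2 = -2\} > 0$. This is not merely the openness of ``$\Ker Z_0$ avoids all roots'': one needs the normalization $(\delta,\delta) = |Z_0(\delta)|^2 - \|p(\delta)\|^2$ of Lemma~\ref{lem:coords} to see that $|Z_0(\delta)| \le 1$ forces $\|p(\delta)\| \le \sqrt 3$, hence $\delta$ lies in a compact subset of $\Lambda_\R$, which contains only finitely many integral classes. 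Without this argument, your ``choose $C$ large enough relative to the finitely many $(-2)$-roots'' has no content, and the whole construction of $Q$ fails to give the support property. The global patching step you give is also looser than it needs to be; the paper resolves it crisply by noting that $Q$ depends only on $Z$, that $\cZ$ is locally injective, and then lifting paths using compactness.
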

The proof, given in Section \ref{sec:application}, is fairly similar to the case of
K3 surfaces \cite[Proposition 8.3]{Bridgeland:K3}.
The point of including it here is
to show that in terms of the support property via quadratic forms, and equipped with Theorem
\ref{thm:mainthm}, the proof becomes natural and short.
This result was also proved previously for preprojective algebras of
quivers in \cite{Thomas:stability, Bridgeland:ADE, Ikeda:stability-preprojective}. In each of these
cases, there is a connected component of $\Stab(\cD)$  covering  a connected component of
$\cP_0(X)$; such statements rely crucially on \emph{non-emptiness} of moduli spaces
of stable objects.

\subsection*{Acknowledgements}
I would like to thank Emanuele Macr{\`{\i}} and Paolo Stellari; as indicated above, Theorem
\ref{thm:mainthm} first appeared with a different proof in our joint work \cite{BMS:stabCY3s}. 
I presented similar arguments in my lectures
at the Hausdorff school  on derived categories in Bonn, April 2016;
I am grateful to the organisers for the opportunity, and the participants for their
feedback. I would also like to thank Martin Gulbrandsen and Fran\c cois Charles for pointing out inaccuracies 
in the first arXiv version of this article, and the anonymous referee for detailed comments improving the exposition.
My work was supported by the ERC-StG WallXBirGeom 337039.

\section{Review: definitions and basic properties}
\label{sec:review}

Throughout, $\cD$ will be a triangulated category, equipped with a group homomorphism
\[ v \colon K(\cD) \to \Lambda \]
from its $K$-group to an abelian group $\Lambda \cong \Z^m$.

\subsection*{Definitions}
We first recall the main definitions from \cite{Bridgeland:Stab}.

\begin{Def} \label{def:slicing}
A \emph{slicing} $\cP$ on $\cD$ is a collection of full subcategories $\cP(\phi)$ for all $\phi \in
\R$ with
\begin{enumerate}
\item $\cP(\phi+1) = \cP(\phi)[1]$ for all $\phi \in \R$;
\item for $\phi_1 > \phi_2$ and $E_i \in \cP(\phi_i),  i = 1,2$, we have $\Hom(E_1, E_2) = 0$; and
\item \label{item:HN} 
for any $E \in \cD$ there is a sequence of maps
$ 0 = E_0 \xrightarrow{i_1} E_1 \to \dots \xrightarrow{i_m} E_m = E$
and of real numbers $\phi_1 > \phi_2 > \dots > \phi_m$
such that the cone of $i_j$ is in $\cP(\phi_j)$
for $j = 1, \dots, m$.
\end{enumerate}
\end{Def}
The non-zero objects of $\cP(\phi)$ are called \emph{semistable of phase $\phi$}; its 
simple objects are called \emph{stable}. The sequence of maps in \eqref{item:HN} is called the Harder-Narasimhan (HN)
filtration of $E$.

\begin{Def} \label{def:prestability}
A pre-stability condition on $\cD$ is a pair $\sigma = (Z, \cP)$ where $Z \colon \Lambda \to \C$ is a group homomorphism and $\cP$ a slicing, such that
for all $0 \neq E \in \cP(\phi)$, we have $Z(v(E)) \in \R_{>0}\cdot e^{i \pi \phi}$.
\end{Def}
We will abuse notation and write $Z(E)$ instead of $Z(v(E))$.

\subsection*{Basic properties}
Let $\GL_2^+(\R)$ denote the group of real $2 \times 2$-matrices with positive determinant. Since $\GL_2^+(\R)$ acts on $S^1$, its universal
cover $\wGL$ acts on the universal cover $\R \to S^1$ given explicitly by $\phi \mapsto e^{i\pi\phi}$.
For $\tilde g \in \wGL$ we will write $g$  for the corresponding element of $\GL_2^+(\R)$, and
$\tilde g.\phi$ for the given action on $\R$.

\begin{Prop} \label{prop:wGL}
There is a natural action of $\wGL$ on the set of pre-stability conditions given by
$\tilde g.(Z, \cP) = (g \circ Z, \cP')$ where
$\cP'(\tilde g.\phi) = \cP(\phi)$.
\end{Prop}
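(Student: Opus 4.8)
The plan is to check, in this order, first that the pair $(Z',\cP') = \tilde g.(Z,\cP)$ defined by the two formulas is again a pre-stability condition, and then that $\tilde g \mapsto \bigl((Z,\cP)\mapsto \tilde g.(Z,\cP)\bigr)$ is a left action, i.e.\ sends the identity to the identity and is compatible with composition. Everything rests on three elementary features of the $\wGL$-action on $\R$ recalled above: (a) each $\tilde g$ acts on $\R$ by a strictly increasing homeomorphism, since $\GL_2^+(\R)$ acts on $S^1$ preserving orientation; (b) $\tilde g.(\phi+1) = (\tilde g.\phi)+1$, since the linear map $g$ commutes with $v\mapsto -v$ and the latter covers $\phi\mapsto\phi+1$; and (c) $g$ carries the ray $\R_{>0}\cdot e^{i\pi\phi}$ onto the ray $\R_{>0}\cdot e^{i\pi(\tilde g.\phi)}$, which is exactly the equivariance of the covering map $\R\to S^1$, $\phi\mapsto e^{i\pi\phi}$, defining the action.

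Granting (a)--(c), one checks that $\cP'$ satisfies the three conditions of Definition~\ref{def:slicing}. Since $\phi\mapsto\tilde g.\phi$ is a bijection of $\R$, every real number is uniquely of the form $\tilde g.\phi$. For the shift condition, $\cP'\bigl((\tilde g.\phi)+1\bigr) = \cP'\bigl(\tilde g.(\phi+1)\bigr) = \cP(\phi+1) = \cP(\phi)[1] = \cP'(\tilde g.\phi)[1]$, using (b). For the vanishing of $\Hom$'s, given $\psi_1 = \tilde g.\phi_1 > \tilde g.\phi_2 = \psi_2$, monotonicity (a) gives $\phi_1 > \phi_2$, so $\Hom(E_1,E_2)=0$ for $E_i \in \cP'(\psi_i) = \cP(\phi_i)$ by the same condition for $\cP$. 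For the existence of HN filtrations, the HN filtration $0 = E_0 \to \dots \to E_m$ of a given $E$ with respect to $\cP$, with phases $\phi_1 > \dots > \phi_m$, has $\cone(i_j) \in \cP(\phi_j) = \cP'(\tilde g.\phi_j)$ and $\tilde g.\phi_1 > \dots > \tilde g.\phi_m$ by (a), hence is an HN filtration with respect to $\cP'$. Finally $Z'$ is compatible with $\cP'$ in the sense of Definition~\ref{def:prestability}: if $0 \neq E \in \cP'(\tilde g.\phi) = \cP(\phi)$ then $Z(E) \in \R_{>0}\cdot e^{i\pi\phi}$, so by (c) $Z'(E) = g\bigl(Z(E)\bigr) \in \R_{>0}\cdot e^{i\pi(\tilde g.\phi)}$, which is the condition at phase $\tilde g.\phi$.

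For the action axioms, the identity of $\wGL$ corresponds to the identity matrix and fixes every $\phi$, hence fixes $(Z,\cP)$. For composition, put $(W,\cQ) = \tilde h.(Z,\cP)$, so $W = h\circ Z$ and $\cQ(\tilde h.\phi) = \cP(\phi)$; then $\tilde g.(W,\cQ) = (W',\cQ')$ where $W' = g\circ W = (gh)\circ Z$ and $\cQ'(\tilde g.\psi) = \cQ(\psi)$. Setting $\psi = \tilde h.\phi$ and using $\tilde g.(\tilde h.\phi) = (\tilde g\tilde h).\phi$, which holds because $\wGL$ acts on $\R$, gives $\cQ'\bigl((\tilde g\tilde h).\phi\bigr) = \cQ(\tilde h.\phi) = \cP(\phi)$; this is precisely the formula for $(\tilde g\tilde h).(Z,\cP)$.

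The only step with genuine content is (c): that the purely combinatorial reindexing $\cP'(\tilde g.\phi) = \cP(\phi)$ is compatible, ray by ray, with the linear action $Z \mapsto g\circ Z$ on central charges. I expect this to be the main --- and only mild --- obstacle, since it is the one place where the two defining formulas for $\tilde g.\sigma$ are forced to be consistent rather than independent; but it amounts to nothing more than unwinding the construction of the $\wGL$-action on the universal cover of $S^1$. Everything else is routine transport of structure along the homeomorphism $\phi\mapsto\tilde g.\phi$.
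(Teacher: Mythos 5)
The paper states this Proposition without proof, as a recalled basic fact from \cite{Bridgeland:Stab}, so there is no in-paper argument to compare against. Your verification is correct and is the standard one: the three properties (a)--(c) of the $\wGL$-action on $\R$ that you isolate are precisely what is needed, the slicing axioms and the compatibility $Z'(E)\in\R_{>0}\cdot e^{i\pi(\tilde g.\phi)}$ follow exactly as you argue, and your composition check correctly confirms that with the sign conventions used here ($Z'=g\circ Z$, $\cP'(\tilde g.\phi)=\cP(\phi)$) one gets a \emph{left} action. Note that Bridgeland's original formulation uses $g^{-1}\circ Z$ and phrases it as a right action; the paper's convention differs by an inverse, and your proof correctly verifies the axiom as stated here rather than importing Bridgeland's convention.
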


The \emph{heart of a bounded t-structure} is a full subcategory $\cA \subset \cD$ such that
\[ 
\cS(\phi) := \begin{cases} \cA[\phi] & \text{if $\phi \in \Z$} \\
			0 & \text{if $\phi \notin \Z$} \end{cases}
\]
is a slicing (see \cite[Lemma 3.2]{Bridgeland:Stab}). It is automatically an abelian subcategory; 
and stability conditions on $\cD$ can be constructed from slope-stability in $\cA$.

\begin{Def}
A stability function $Z$ on an abelian category $\cA$ is a morphism
$Z \colon K(\cA) \to \C$ of abelian groups such that for all $0 \neq E \in \cA$, the complex number 
$Z(E)$ is in the semiclosed upper half plane
\[  \H:= \stv{z \in \C}{\Im z > 0\  \text{or} \ z \in \R_{<0}}.
\]
\end{Def}

For $ 0 \neq E \in \cA$ we define its phase by $\phi(E) := \frac 1\pi \arg Z(E) \in (0,1]$.
An object $E \in \cA$ is called $Z$-semistable if for all
subobjects $A \into E$, we have $\phi(A) \le \phi(E)$. 

\begin{Def} We say that a stability function $Z$ on an abelian category $\cA$ satisfies the
\emph{HN property} if every object $E \in \cA$ admits a Harder-Narasimhan (HN) filtration: a
sequence $0 = E_0 \into E_1 \into E_2 \into \dots \into E_m = E$ such that
$E_i/E_{i-1}$ is $Z$-semistable for $i = 1, \dots, m$, with
\[ \phi\left(E_1/E_0\right) > \phi\left(E_2/E_1\right) > \dots > \phi\left(E_m/E_{m-1}\right). \]
\end{Def}

\begin{Prop}[{\cite[Proposition 5.3]{Bridgeland:Stab}}] \label{prop:stabviaheart}
To give a pre-stability condition on $\cD$ is equivalent to giving a heart $\cA$ of a bounded
t-structure, and a stability function $Z$ on $\cA$ with the HN property.
\end{Prop}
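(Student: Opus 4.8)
The plan is to exhibit mutually inverse constructions relating the two kinds of data. Given a pre-stability condition $\sigma = (Z, \cP)$, write $\cP(I) \subseteq \cD$ for the extension-closed subcategory generated by the $\cP(\phi)$ with $\phi$ in an interval $I \subseteq \R$; by the HN axiom \eqref{item:HN} of Definition \ref{def:slicing} this is the full subcategory of objects all of whose HN factors have phase in $I$. Set $\cA := \cP\bigl((0,1]\bigr)$. First I would invoke the standard fact that $\cD^{\le 0} := \cP\bigl((0,\infty)\bigr)$ and $\cD^{\ge 0} := \cP\bigl((-\infty, 1]\bigr)$ form a bounded $t$-structure with heart $\cA$, orthogonality and the truncation triangles being immediate from the $\Hom$-vanishing and HN axioms of Definition \ref{def:slicing}. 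Next I would check that $Z$ restricts to a stability function on $\cA$: for $0 \neq E \in \cA$ the identity $Z(E) = \sum_j Z(A_j)$ over the $\cP$-HN factors $A_j \in \cP(\phi_j)$, $\phi_j \in (0,1]$, of $E$ exhibits $Z(E)$ as a sum of vectors on the rays $\R_{>0}\, e^{i\pi\phi_j}$, hence as an element of the convex cone they span, which is contained in $\H$ --- it meets the negative real axis only when all $\phi_j = 1$. Finally, the $\cP$-HN filtration of $E$ lies entirely in $\cA$, so its triangles become short exact sequences in $\cA$ with quotients $A_j$ of strictly decreasing phase; each $A_j$ is $Z$-semistable, since a subobject $B \into A_j$ in $\cA$ with $\phi(B) > \phi(A_j)$ would, through the top $\cP$-HN factor $B' \in \cP(\psi)$ of $B$ --- which satisfies $\psi \ge \phi(B) > \phi(A_j)$ --- supply a nonzero composite of monomorphisms $B' \into B \into A_j$, contradicting the $\Hom$-vanishing axiom. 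Thus $Z$ has the HN property.

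Conversely, from a heart $\cA$ carrying a stability function $Z$ with the HN property, define $\cP(\phi)$, for $\phi \in (0,1]$, to be the category of $Z$-semistable objects of phase $\phi$, and set $\cP(\psi + n) := \cP(\psi)[n]$ for $\psi \in (0,1]$ and $n \in \Z$. The shift axiom of Definition \ref{def:slicing} and the compatibility condition of Definition \ref{def:prestability} hold by construction. For the $\Hom$-vanishing axiom, given $E_i \in \cP(\phi_i)$ with $\phi_1 > \phi_2$, write $\phi_i = \psi_i + n_i$ and $E_i = F_i[n_i]$ with $F_i \in \cA$ $Z$-semistable of phase $\psi_i$; then $\phi_1 > \phi_2$ forces $n_1 \ge n_2$, so $\Hom(E_1, E_2) = \Hom\bigl(F_1, F_2[n_2 - n_1]\bigr)$ vanishes --- either because $n_2 < n_1$ and $\cA$ is the heart of a $t$-structure, or because $n_1 = n_2$ and $\psi_1 > \psi_2$, in which case a nonzero map of $Z$-semistables in $\cA$ would force $\phi(F_1) \le \phi(F_2)$ by the usual argument on its image.

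The HN axiom \eqref{item:HN} for objects of $\cD$ is the crux. For $E \in \cA$ it is immediate: the Harder-Narasimhan filtration of $E$ for $Z$ is a filtration in $\cA$ with $Z$-semistable quotients of strictly decreasing phases in $(0,1]$, which is exactly an HN filtration in the sense of the slicing. For a general $E \in \cD$, boundedness of the $t$-structure yields a finite filtration of $E$ whose successive cones are the shifted cohomology objects $H^{k}(E)[-k] \in \cP\bigl((-k,\, -k+1]\bigr)$, listed in order of decreasing phase; refining each such cone by the (shifted) $Z$-HN filtration of $H^k(E) \in \cA$ and splicing the resulting triangles together by repeated use of the octahedral axiom produces the required tower of maps, and the phases come out strictly decreasing automatically, since those contributed by $H^k(E)[-k]$ fill a subset of $(-k,\, -k+1]$ and these half-open intervals are disjoint and correctly ordered. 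I expect this splicing to be the one genuinely fiddly step: one cannot intersect or add subobjects inside a triangulated category, so the tower has to be assembled directly from octahedra, with care about which object realises each $E_j$ of Definition \ref{def:slicing}\eqref{item:HN}; the rest is bookkeeping. Finally I would check that the two constructions are mutually inverse: $\cP\bigl((0,1]\bigr)$ returns $\cA$, and the $Z$-semistable objects of $\cA$ of phase $\phi$ return $\cP(\phi)$ for $\phi \in (0,1]$ --- a $Z$-semistable object of $\cA$ is automatically $\cP$-semistable, since otherwise its phase would be strictly smaller than that of its top $\cP$-HN factor. Uniqueness of HN filtrations in $\cD$, though not logically needed above, follows by the standard argument.
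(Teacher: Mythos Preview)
The paper does not actually prove this proposition: it cites \cite[Proposition 5.3]{Bridgeland:Stab} and, in the paragraph on proof ideas, explicitly lists Proposition~\ref{prop:stabviaheart} among the results whose proofs it chose \emph{not} to reproduce. The only content the paper adds is the sentence after the statement recording the two constructions --- $\cA$ as the extension-closure of the $\cP(\phi)$ for $\phi \in (0,1]$, and $\cP(\phi)$ as the $Z$-semistable objects of $\cA$ of phase $\phi$ --- which agree with yours.

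Your argument is correct and is essentially the standard one from Bridgeland's original paper. The forward direction is clean; your check that each $A_j \in \cP(\phi_j)$ is $Z$-semistable in $\cA$ is right (the top $\cP$-HN factor $B'$ of $B$ really is a subobject of $B$ in $\cA$, since the HN filtration of $B \in \cA$ lives in $\cA$ and consists of monomorphisms there). In the reverse direction your identification of the octahedral splicing as the one genuinely fiddly step is accurate, and the disjointness of the half-open intervals $(-k, -k+1]$ is exactly what makes the concatenated phases strictly decreasing. So there is nothing to compare against beyond noting that you have filled in precisely the argument the paper elected to omit.
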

Here we tacitly assume that $Z$ factors via
$K(\cA) = K(\cD) \xrightarrow{v} \Lambda$. Given $(Z, \cA)$, the slicing is determined by
setting $\cP(\phi)$ to be the $Z$-semistable objects $E \in \cA$ of phase $\phi$ for $\phi \in (0,
1]$. Conversely,
given  $(Z, \cP)$, the heart $\cA = \cP(0, 1]$ is the smallest extension-closed subcategory of $\cD$ containing
$\cP(\phi)$ for $\phi \in (0, 1]$. More generally, $\cP(\phi, \phi+1]$ is a heart for every $\phi \in \R$.

\begin{Def} A stability condition $\sigma$ is a pre-stability condition that satisfies the support
property in the sense of Definition \ref{def:supportproperty} with respect to some quadratic
form $Q$ on $\Lambda \otimes \R$.
\end{Def}

\subsection*{Topology and local injectivity} 
There is a generalised metric, and thus a topology, on the set of slicings $\Slice(\cD)$ given as follows. Given two slicings $\cP, \cQ$, we write
$\phi^{\pm}(E)$ and $\psi^{\pm}(E)$ for the largest and smallest phase in the associated HN
filtration of an object $E$ for $\cP$ and $\cQ$, respectively. Then we define the distance of $\cP$ and $\cQ$ by
\[
d(\cP, \cQ) := \sup \stv{ \abs{\phi^+(E) - \psi^+(E)}, \abs{\phi^-(E) - \psi^-(E)}}{E \in \cD} \in [0, +\infty].
\]

We recall that this distance can be computed by considering $\cP$-semistable objects alone:
\begin{Lem}[{\cite[Lemma 6.1]{Bridgeland:Stab}}] \label{lem:distviasemistables}
We have $d(\cP, \cQ) = d'(\cP, \cQ)$, where the latter is defined by
\[ d'(\cP, \cQ) := \sup \stv{\psi^+(E) - \phi, \phi - \psi^-(E)}
{\phi \in \R, 0 \neq E \in \cP(\phi)}.\]
\end{Lem}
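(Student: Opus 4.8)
The plan is to prove the two inequalities $d'(\cP,\cQ) \le d(\cP,\cQ)$ and $d(\cP,\cQ) \le d'(\cP,\cQ)$ separately. The first is immediate: a $\cP$-semistable object $0 \neq E \in \cP(\phi)$ is its own $\cP$-HN filtration, so $\phi^+(E) = \phi^-(E) = \phi$, whence $\psi^+(E) - \phi \le \abs{\phi^+(E) - \psi^+(E)} \le d(\cP,\cQ)$ and $\phi - \psi^-(E) \le \abs{\phi^-(E) - \psi^-(E)} \le d(\cP,\cQ)$; taking the supremum over all such $E$ gives $d' \le d$. For the reverse inequality it suffices to show that every $0 \neq E \in \cD$ satisfies
\[ \phi^+(E) - d' \le \psi^+(E) \le \phi^+(E) + d' \qquad\text{and}\qquad \phi^-(E) - d' \le \psi^-(E) \le \phi^-(E) + d', \]
and then pass to the supremum over $E$. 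Write the $\cP$-HN filtration of $E$ as $0 = E_0 \to E_1 \to \dots \to E_n = E$ with factors $A_i \in \cP(\phi_i)$ and $\phi_1 = \phi^+(E) > \dots > \phi_n = \phi^-(E)$. The \emph{only} input from the hypothesis is that, by the definition of $d'$, each $A_i$ has all of its $\cQ$-HN phases in the interval $[\phi_i - d', \phi_i + d']$; the two halves of the claimed inequalities come from two different elementary facts about slicings.

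For the \emph{outer} bounds $\psi^+(E) \le \phi^+(E) + d'$ and $\psi^-(E) \ge \phi^-(E) - d'$, I would use that for any real number $b$ the full subcategory $\cQ_{\le b}$ of objects all of whose $\cQ$-HN phases are $\le b$ is closed under extensions (if $X, Y \in \cQ_{\le b}$ and $X \to Z \to Y \to X[1]$ is a triangle then $Z \in \cQ_{\le b}$), and similarly for $\cQ_{\ge a}$. This follows because $\cQ_{\le b}$ consists exactly of those $X$ with $\Hom(Y, X) = 0$ for every $\cQ$-semistable $Y$ of phase $> b$ — one inclusion by d\'evissage along HN filtrations, the other by splitting the HN filtration of $X$ at $b$ and checking that the map to $X$ from the part with phases $> b$ is nonzero — so extension-closedness drops out of the long exact $\Hom$-sequence. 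Since each $A_i$ lies in $\cQ_{\le \phi_i + d'} \subseteq \cQ_{\le \phi_1 + d'}$ and $E$ is an iterated extension of $A_1, \dots, A_n$ via its $\cP$-HN filtration, we conclude $E \in \cQ_{\le \phi_1 + d'}$, i.e.\ $\psi^+(E) \le \phi^+(E) + d'$; symmetrically $E \in \cQ_{\ge \phi_n - d'}$.

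For the \emph{inner} bounds $\psi^+(E) \ge \phi^+(E) - d'$ and $\psi^-(E) \le \phi^-(E) + d'$ (vacuous when $n = 1$, where $E = A_1$), I would use that for $n \ge 2$ the structure maps $E_1 \to E$ and $E \to A_n$ (the latter from the triangle $E_{n-1} \to E \to A_n \to E_{n-1}[1]$) are both nonzero: this follows by applying $\Hom(A_1, -)$, resp.\ $\Hom(-, A_n)$, to the relevant triangle, the only possible obstruction being the image of a $\Hom$-group that vanishes by axiom (2) of a slicing together with d\'evissage (the source has strictly larger $\cP$-phases than the target). Now $A_1 = E_1 \in \cP(\phi_1)$ forces $A_1 \in \cQ_{\ge \phi_1 - d'}$; if $\psi^+(E) < \phi_1 - d'$ then all $\cQ$-HN phases of $E$ are $< \phi_1 - d'$, so $\Hom(A_1, E) = 0$ by d\'evissage, contradicting $E_1 \to E \neq 0$. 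Dually $A_n \in \cQ_{\le \phi_n + d'}$, so $\psi^-(E) > \phi_n + d'$ would force $\Hom(E, A_n) = 0$, contradicting $E \to A_n \neq 0$. Combining the four inequalities and passing to the supremum over $E$ gives $d \le d'$, hence equality.

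The step I expect to be the real obstacle is this pair of inner bounds. The tempting shortcut — trying to deduce $\psi^+(E) \ge \psi^+(A_1) \ge \phi_1 - d'$ from some ``triangle inequality'' for $\psi^+$ applied to $E_1 \to E \to E/E_1$ — runs in the wrong direction, and one genuinely has to exploit that the extremal $\cP$-Harder-Narasimhan pieces of $E$ map nontrivially into, resp.\ out of, $E$, combined with the $\Hom$-vanishing characterization of $\cQ_{\le b}$ and $\cQ_{\ge a}$. Everything else is routine d\'evissage along Harder-Narasimhan filtrations, in the spirit of \cite{Bridgeland:Stab}.
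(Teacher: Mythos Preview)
Your proof is correct and follows the same d\'evissage-plus-$\Hom$-vanishing strategy as the paper. The paper is terser---it only spells out the outer bound $\psi^+(E) \le \phi^+(E) + d'$ and its dual for $\psi^-$, then asserts that ``combined, they imply the claim''---so your explicit treatment of the inner bounds via the nonzero maps $A_1 \to E$ and $E \to A_n$ in fact fills in a step the paper leaves to the reader.
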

\begin{proof}
The inequality $d(\cP, \cQ) \ge d'(\cP, \cQ)$ is immediate. For the converse,
consider $E \in \cD$, and let $A_i$ be one of its HN factors with respect to $\cP$.
Then $\psi^+(A_i) \le \phi(A_i) + d'(\cP, \cQ) \le \phi^+(E) + d'(\cP, \cQ)$. 
Hence $E$ admits no non-zero maps from $\cQ$-stable objects
of phase bigger than $\phi^+(E) + d'(\cP, \cQ)$, and so $\psi^+(E) \le \phi^+(E) + d'(\cP, \cQ)$. 
The analogous inequality $\psi^-(E) \ge \phi^-(E) - d'(\cP, \cQ)$ follows similarly.

Finally, for $E \in \cD$ we have a non-zero map $A_1 \to E$ with $A_1 \in \cP(\phi^+(E))$.
Therefore, $\psi^+(E) \ge \psi^-(A_1) \ge \phi^-(A_1) - d'(\cP, \cQ) = \phi^+(E) - d'(\cP, \cQ)$,
and so $\abs{\psi^+(E) - \phi^+(E)} \le d'(\cP, \cQ)$. Combined with the same inequality for
$\psi^-$, the claim follows.
\end{proof}

The topology on $\Stab_{\Lambda}(\cD)$ 
is the coarsest topology such that both forgetful maps
\begin{align*}
\Stab_{\Lambda}(\cD) & \to \Slice(\cD),  \quad (Z, \cP) \mapsto \cP \\
\cZ \colon \Stab_{\Lambda}(\cD) & \to \Hom(\Lambda, \C),  \quad (Z, \cP) \mapsto Z 
\end{align*}
are continuous.

The following Lemma is a variant of \cite[Lemma 6.4]{Bridgeland:Stab}:

\begin{Lem} \label{lem:generaldistanceboundfromZ}
	Assume that $\sigma = (Z, \cP)$ and $\tau = (W, \cQ)$ are two pre-stability conditions such that $\sigma$ satisfies the support property with respect to $Q$, such that
	\begin{equation} \label{ineq:ZW}
	\frac{\abs{W(v)-Z(v)}}{\abs{Z(v)}} < \sin \pi \epsilon \quad \text{for all $v \in \Lambda$ with $Q(v) \ge 0$,}
	\end{equation}
	and such that either $d(\cP, \cQ) < \frac 14$, or that $\sigma, \tau$ have the same heart
	$\cP(0,1] = \cQ(0,1]$. Then $d(\cP, \cQ) < \epsilon$.
\end{Lem}
Of course, \eqref{ineq:ZW} means in particular that the phases of $W(v)$ and $Z(v)$ differ by at
most $\epsilon$.
\begin{proof}
	We want to apply Lemma \ref{lem:distviasemistables}, so let us consider an object $E \in \cP(\phi)$. In the first case, $d(\cP, \cQ) < \frac 14$, we apply this assumption twice to see that
	every HN filtration factor of $E$ with respect to $\cQ$ is contained in 
	$\cQ(\phi - \frac 14, \phi + \frac 14) \subset \cP(\phi-\frac 12, \phi+\frac 12] =: \cA$. In particular, the first
	HN filtration factor $E_1 \to E$ of $E$ with respect to $\cQ$ is a subobject of $E$ in the abelian category
	$A$. The analogous claim is obvious in the second case for $\cA = \cP(0,1]$. Therefore, every HN factor $F$ of $E_1$ with respect to $\cP$ has phase at most $\phi$. By \eqref{ineq:ZW}, it follows
	that $W(F)$ has phase less than $\phi + \epsilon$, and thus the same holds for the phase $\psi^+(E)$ of $W(E_1)$. 
	
	A similar argument shows $\psi^-(E) > \phi(E) - \epsilon$, thus proving the claim.
\end{proof}

\begin{Cor} \label{cor:continuoussection}
The  map $\cZ \colon \Stab_{\Lambda}(\cD) \to \Hom(\Lambda, \C)$, see Definition \ref{def:slicing}, is locally injective.

Moreover, consider a section $U \to \Stab_\Lambda(\cD), Z \mapsto \sigma_Z = (Z, \cP_Z)$ of $\cZ$ defined on a subset
$U \subset \Hom(\Lambda, \C)$, such that every $\sigma_Z$ satisfies the support property with
respect to $Q$.  Assume that $U$ can be covered by open subsets $V_i$ such that for
all $Z, Z' \in V_i$ we have $d(\cP_Z, \cP_{Z'}) < \frac 14$. Then this section is continuous.
\end{Cor}
\begin{proof}
For the first statement, we just set $Z = W$ in the Lemma, to obtain $d(\cP, \cQ) = 0$.

For the second statement, we only need to show that \eqref{ineq:ZW} holds in a neighbourhood
of $Z \in \Hom(\Lambda, \C)$ when $Q$ is negative definite on $\Ker Z$. Choose any metric on
$\Lambda_\R$; then clearly \eqref{ineq:ZW} only needs to be checked for vectors with unit length.
Since $Q(\blank) \ge 0$ defines a compact subset of the unit sphere, on which $\abs{Z(v)}$ is a
positive continuous function, the claim follows.
\end{proof}

Morever, Proposition \ref{prop:wGL} gives a \emph{continuous} action of $\wGL$ 
on $\Stab_\Lambda(\cD)$.

\section{Harder-Narasimhan filtrations via the Harder-Narasimhan polygon}\label{sec:HNpolygon}

Throughout this section, let $\cA$ be an abelian category with a stability function $Z$.

\begin{Def}
The \emph{Harder-Narasimhan polygon} $\HN^Z(E)$ of an object $E \in \cA$ is the convex
hull of the central charges $Z(A)$ of all subobjects $A \subset E$ of $E$.
\end{Def}
(The trivial subobjects $A = 0$ or $A = E$ are included in the definition.) The idea to consider
this convex set in the context of slope-stability goes back at least 40 years
\cite{Shatz:Degeneration}.

\begin{Def} We say that the Harder-Narasimhan polygon $\HN^Z(E)$ of an object $E \in \cA$
is \emph{polyhedral on the left} if the set has finitely many extremal points
$0 = z_0, z_1, \dots, z_m = Z(E)$ such that $\HN^Z(E)$ lies to the right\footnote{By ``to the right of'' a given path $\gamma$ we mean all points of the form $z + x$ where
	$z \in \gamma$ and $x \in \R_{\ge 0}$.}  of the 
path $z_0z_1z_2\dots z_m$;
see fig.~\ref{fig:HNpoly}.
\end{Def}

	\begin{wrapfigure}[8]{r}{0.4\textwidth}
 \centering
        \includegraphics{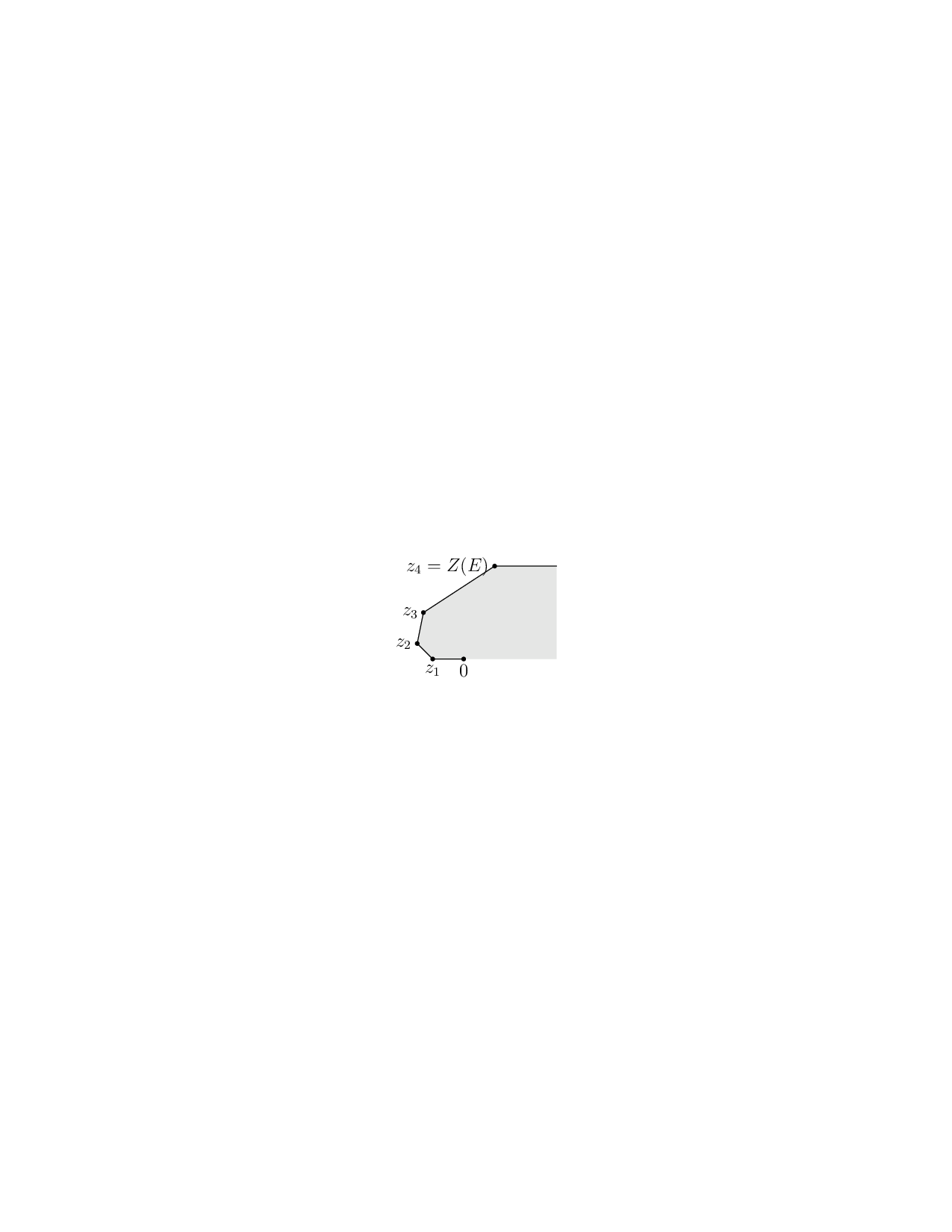}
    \caption{Polyhedral on the left}
    \label{fig:HNpoly}
\end{wrapfigure}

In other words, the intersection of $\HN^Z(E)$ with the closed half-plane to the
left of the line through $0$ and $Z(E)$ is the polygon with vertices $z_0, z_1, \dots, z_m$.
Our proof of Theorem \ref{thm:mainthm} is based on the following well-known statement; we provide a
proof for completeness:

\begin{Prop} \label{prop:HNviapoly}
The object $E$ has a Harder-Narasimhan filtration with respect to $Z$ if and only if its
Harder-Narasimhan polygon $\HN^Z(E)$ is polyhedral on the left.
\end{Prop}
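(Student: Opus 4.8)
The plan is to prove both directions by directly relating the Harder--Narasimhan filtration to the vertices of the polygon $\HN^Z(E)$ on its left side. The key geometric observation is that the path $z_0z_1\dots z_m$ from $0$ to $Z(E)$ that bounds $\HN^Z(E)$ on the left is \emph{concave} as seen from the right: the successive segments $z_{i-1}z_i$ turn consistently clockwise, so their slopes --- measured by the phase $\phi$ --- are strictly decreasing, i.e.\ $\phi(z_1-z_0) > \phi(z_2-z_1) > \dots > \phi(z_m-z_{m-1})$. This is exactly the phase inequality required of an HN filtration, so the whole argument is a dictionary between ``vertices of the left boundary'' and ``HN factors''.

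\emph{($\Leftarrow$) Suppose $\HN^Z(E)$ is polyhedral on the left} with extremal points $0=z_0,z_1,\dots,z_m=Z(E)$. I would pick, for each $i$, a subobject $A_i \subset E$ with $Z(A_i)=z_i$; these can be arranged in a chain $0=A_0 \subset A_1 \subset \dots \subset A_m=E$ by the standard argument that if $Z(A)$ and $Z(B)$ are both on the left boundary with $A\subset E$, $B\subset E$ and $z(A)$ ``before'' $z(B)$, then $A+B$ also has central charge on the boundary (using that $Z(A\cap B)+Z(A+B)=Z(A)+Z(B)$ and that both $A\cap B$, $A+B$ give points of $\HN^Z(E)$), forcing $Z(A+B)=z(B)$; replacing $B$ by $A+B$ we get the containment. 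Set $F_i=A_i/A_{i-1}$; then $Z(F_i)=z_i-z_{i-1}$, so the phases of the $F_i$ are strictly decreasing by the concavity remark above. It remains to see each $F_i$ is semistable: a destabilizing subobject $B'\subset F_i$ would pull back to a subobject $A_{i-1}\subsetneq A' \subsetneq A_i$ (or between $A_{i-1}$ and $A_i$) whose central charge $Z(A')$ would lie strictly to the left of the segment $z_{i-1}z_i$, contradicting that this segment is on the boundary of $\HN^Z(E)$. Hence $0=A_0\subset\dots\subset A_m=E$ is an HN filtration.

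\emph{($\Rightarrow$) Suppose $E$ has an HN filtration} $0=E_0\subset E_1 \subset\dots\subset E_m=E$ with semistable quotients $F_i=E_i/E_{i-1}$ of strictly decreasing phases. Put $z_i=Z(E_i)$. Concatenating the segments $z_{i-1}z_i$ (each with phase $\phi(F_i)$, strictly decreasing) gives a concave path from $0$ to $Z(E)$, so its vertices $z_0,\dots,z_m$ (discarding any $z_i$ with $Z(F_i)=Z(F_{i+1})$-type collinearity, though strict inequality of phases already rules this out) are the extremal points of their own convex hull and that hull lies to the right of the path. The content is then that \emph{every} subobject $A\subset E$ has $Z(A)$ to the right of this path, i.e.\ $\HN^Z(E)$ is contained in the region to the right. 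For this I would induct on the length $m$ of the filtration: intersecting $A$ with $E_{m-1}$ gives $0\to A\cap E_{m-1}\to A\to A/(A\cap E_{m-1})\to 0$ with $A/(A\cap E_{m-1})\hookrightarrow F_m$, so $Z(A)=Z(A\cap E_{m-1})+Z(A/(A\cap E_{m-1}))$; by induction the first summand lies right of the path $z_0\dots z_{m-1}$, and the second is a point with phase $\ge$ something controlled by semistability of $F_m$ --- carefully, $A/(A\cap E_{m-1})$ has phase $\le \phi(F_m)$, which is the slope of the last edge, so adding it keeps us in the right half-region. The main obstacle, and the step deserving the most care, is precisely this last geometric estimate: showing that ``right of the path'' is preserved under adding a vector of phase no larger than the final (smallest) slope, and handling the degenerate cases where $Z$ vanishes on a subquotient or where several vertices are collinear, so that the strict inequalities in the definition of ``polyhedral on the left'' genuinely hold. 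All the other steps are the routine homological-algebra bookkeeping of pulling subobjects back and forth through the filtration.
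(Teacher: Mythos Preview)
Your proposal is correct and follows essentially the same route as the paper: both directions use exactly the ideas you outline (the midpoint identity $Z(A\cap B)+Z(A+B)=Z(A)+Z(B)$ against extremality of the vertices for the chain, a destabilising subobject contradicting the boundary for semistability, and the induction via $A\cap E_{m-1}$ for the converse). The only cosmetic difference is that the paper deduces $E_{i-1}\cap E_i \cong E_{i-1}$ directly (using that $Z$ vanishes only on the zero object) rather than replacing $A_i$ by $A_{i-1}+A_i$, so no iterative modification of the chosen subobjects is needed.
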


Assume that $\HN^Z(E)$ is polyhedral on the left. For each $i = 1, \dots, m$, choose a subobject $E_i \subset E$ such that $Z(E_i) = z_i$. (This exists as $z_i$ is extremal.)
\begin{Lem}  \label{lem:filtration}
This is a filtration, i.e. $E_{i-1} \subset E_{i}$ for $i = 1, \dots, m$. 
\end{Lem}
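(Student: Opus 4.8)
The plan is to show $E_{i-1} \subset E_i$ by considering the subobject $E_{i-1} \cap E_i \subset E$ and comparing central charges. First I would form the intersection $A := E_{i-1} \cap E_i$ inside $E$ (using that $\cA$ is abelian) and the sum $B := E_{i-1} + E_i \subset E$, so that there is a short exact sequence $0 \to A \to E_{i-1} \oplus E_i \to B \to 0$, giving $Z(A) + Z(B) = Z(E_{i-1}) + Z(E_i) = z_{i-1} + z_i$. Since $A \subset E$ and $B \subset E$, both $Z(A)$ and $Z(B)$ lie in $\HN^Z(E)$; moreover $A$ is a subobject of $E_i$ with quotient $E_i/A \hookrightarrow B/E_{i-1} \subset E/E_{i-1}$, and likewise $B \supset E_i$. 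The key point is that $z_{i-1}, z_i$ are consecutive extremal points on the left boundary path: the segment from $z_{i-1}$ to $z_i$ is an edge of the polygon, so any point of $\HN^Z(E)$ lies (weakly) to the right of the line through $z_{i-1}$ and $z_i$. Since $Z(A) + Z(B)$ is the midpoint-doubling of $z_{i-1}$ and $z_i$, i.e. $Z(A)$ and $Z(B)$ are symmetric about the midpoint of the edge $[z_{i-1}, z_i]$, and both lie on the right side of this line, they must in fact both lie \emph{on} the line, on the segment itself.

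Next I would pin down where on the segment $[z_{i-1}, z_i]$ the points $Z(A)$ and $Z(B)$ sit. Here I would use the ordering of $\HN^Z(E)$ along its boundary: the left-boundary path $z_0 z_1 \dots z_m$ is "convex" in the sense that the edge directions turn consistently, so points of $\HN^Z(E)$ that lie on the line through $z_{i-1}, z_i$ are confined to the closed segment between them (extending the line past either endpoint exits the polygon to the left). Now $A = E_{i-1} \cap E_i$ is a subobject of $E_{i-1}$, and $B = E_{i-1} + E_i$ contains $E_{i-1}$; combined with $Z(A), Z(B) \in [z_{i-1}, z_i]$ and $Z(A) + Z(B) = z_{i-1} + z_i$, I expect to force $Z(A) = z_{i-1}$ and $Z(B) = z_i$ (the alternative assignment would contradict $A \subset E_{i-1}$ together with the way the polygon bends at $z_{i-1}$, i.e. $z_{i-1}$ being extremal).

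Finally, from $Z(A) = z_{i-1}$ and $A \subset E_{i-1}$ with $Z(E_{i-1}) = z_{i-1}$ I would conclude $A = E_{i-1}$: the quotient $E_{i-1}/A$ has $Z(E_{i-1}/A) = 0$, and a nonzero object of $\cA$ has $Z$-value in $\H$, which does not contain $0$; hence $E_{i-1}/A = 0$, i.e. $E_{i-1} = A = E_{i-1} \cap E_i \subset E_i$, as desired. The main obstacle I anticipate is the bookkeeping in the middle step: rigorously ruling out the "swapped" possibility $Z(A) = z_i$, $Z(B) = z_{i-1}$ (and the degenerate cases where $A$ or $B$ lands strictly between the vertices) requires carefully exploiting the extremality of the $z_j$ and the convex ordering of the boundary path, rather than just the single edge inequality — essentially one has to use that $z_{i-1}$ is a genuine vertex, not merely a boundary point, to get the contradiction from $A \subsetneq E_{i-1}$. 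Everything else is a short diagram chase plus the elementary fact that $0 \notin \H$.
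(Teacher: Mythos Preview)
Your proposal is correct and matches the paper's argument essentially step for step: form $A = E_{i-1}\cap E_i$ and $B = E_{i-1}+E_i$, use the short exact sequence to see that $Z(A)$ and $Z(B)$ have the same midpoint as $z_{i-1}$ and $z_i$, use that $\overline{z_{i-1}z_i}$ is an edge of the convex hull to force both onto that segment, and finally invoke $A \subset E_{i-1}$ to pin down $Z(A)=z_{i-1}$, whence $E_{i-1}/A = 0$. The paper dispatches your ``main obstacle'' in a single clause (``Since $A\subset E_{i-1}$, this implies $Z(A)=z_{i-1}$''); the point you are circling around is simply that $z_{i-1}-Z(A)$ is simultaneously $Z(E_{i-1}/A)\in\H\cup\{0\}$ and a nonpositive multiple of the edge direction $z_i-z_{i-1}$, and since every edge direction along the left boundary lies in $\H$ (by convexity the arguments decrease from the first edge $z_1=Z(E_1)\in\H$ to the last edge $z_m-z_{m-1}=Z(E/E_{m-1})\in\H$), these two constraints are compatible only when $Z(A)=z_{i-1}$.
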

\begin{proof}
Let $A := E_{i-1} \cap E_i \subset E$ be the intersection of two subsequent objects, and
$B := E_{i-1} + E_i \subset E$ be their span inside $E$. Then there is a short exact sequence
\[ A \into E_{i-1} \oplus E_i \onto B. \]
Hence the midpoint of $Z(A)$ and $Z(B)$ is also the midpoint of $z_{i-1}$ and $z_i$,
see also figure \ref{fig:HNproofdetail}.

\begin{figure}[!htb]
\centering
\begin{minipage}{0.45\textwidth}
	\centering
        \includegraphics{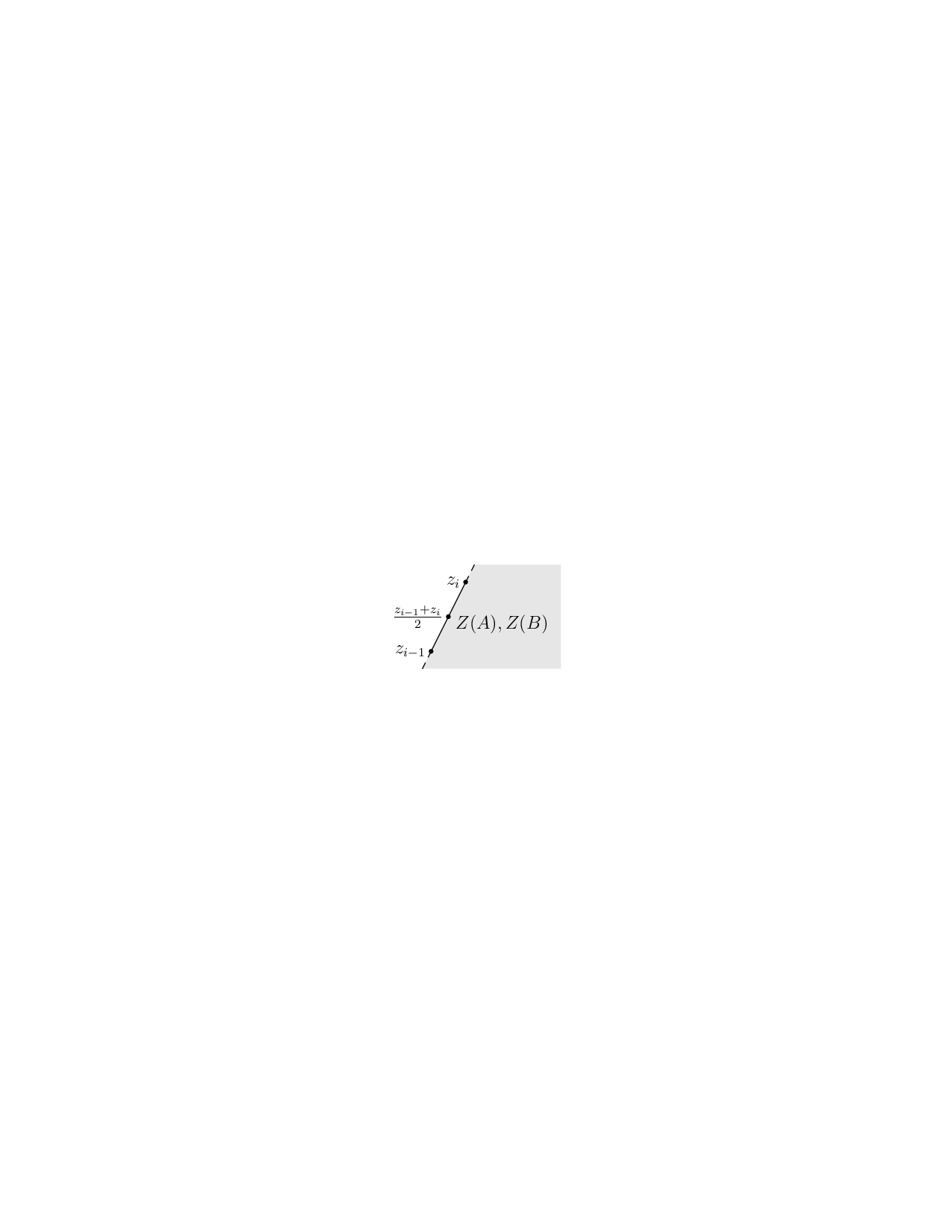}
    \captionof{figure}{Lemma \ref{lem:filtration}}
    \label{fig:HNproofdetail}
\end{minipage}
\begin{minipage}{0.45\textwidth}
	\centering
        \includegraphics{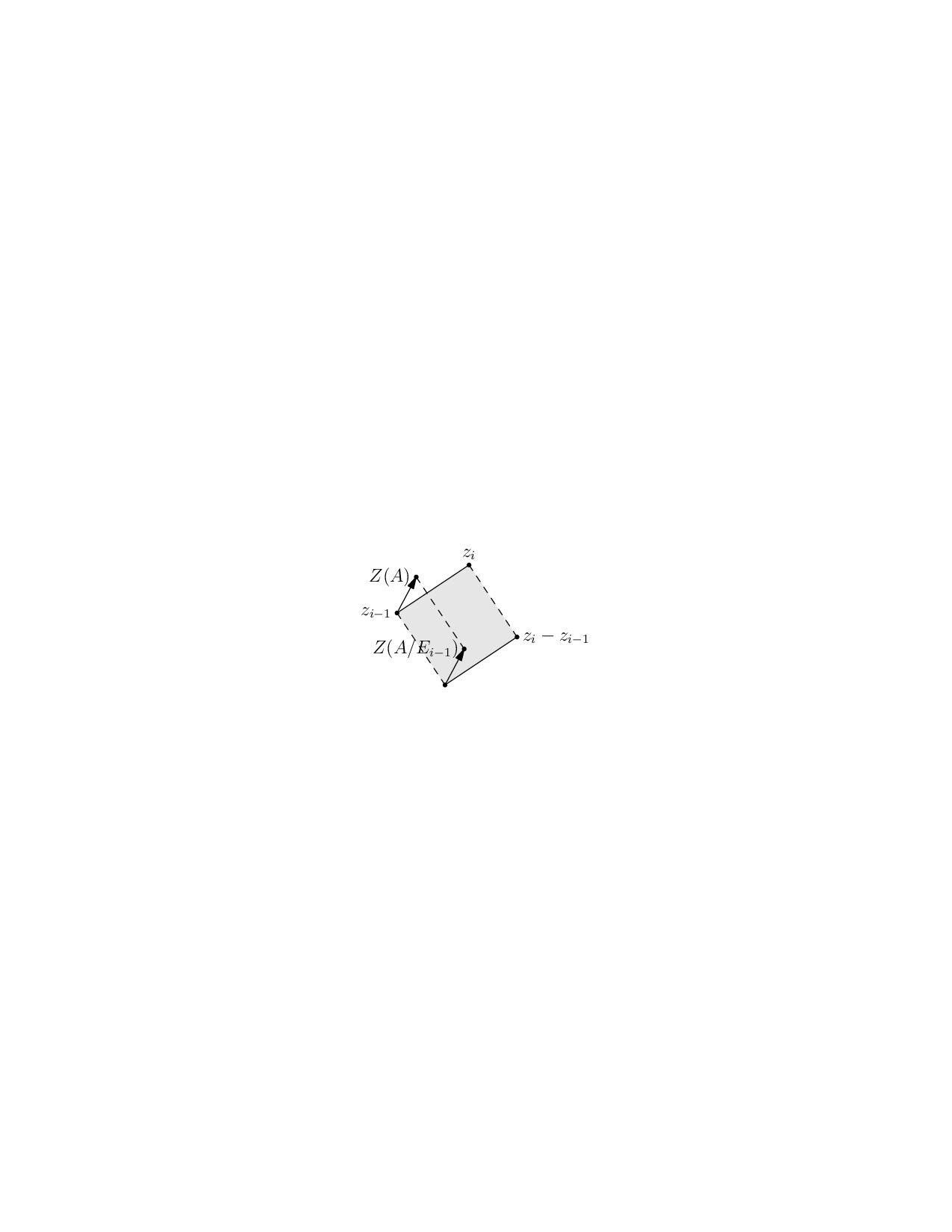}
    \captionof{figure}{Lemma \ref{lem:quotsemistable}}
    \label{fig:HNproofdetail2}
\end{minipage}
\end{figure}

On the other hand, $Z(A), Z(B)$ lie in $\HN^Z(E)$; by convexity and the choice of $z_{i-1}, z_i$,
they both have to lie either in the open half-plane to the right of the line
$\left(z_{i-1}z_i\right)$, or on the line segment $\overline{z_{i-1}z_i}$. The former would be a
contradiction to the previous paragraph, and so $Z(A) \in \overline{z_{i-1}z_i}$. 

Since $A \subset E_{i-1}$, this implies $Z(A) = z_{i-1}$ and $A \cong E_{i-1}$; therefore, 
$E_{i-1} \subset E_i$.
\end{proof}

\begin{Lem} \label{lem:quotsemistable}
The filtration quotient $E_i/E_{i-1}$ is semistable. 
\end{Lem}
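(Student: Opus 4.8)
The goal is to show that every subobject of $E_i/E_{i-1}$ has phase at most $\phi(E_i/E_{i-1})$. First I would record the elementary data: since the $z_j$ are distinct, Lemma~\ref{lem:filtration} gives $E_{i-1}\subsetneq E_i$, so $E_i/E_{i-1}$ is a nonzero object of $\cA$ with $Z(E_i/E_{i-1})=z_i-z_{i-1}$; write $\phi_i$ for its phase. The cases $A=0$ and $A=E_i/E_{i-1}$ being clear, I would fix a subobject $0\neq A\subsetneq E_i/E_{i-1}$ and aim to prove $\phi(A)\le\phi_i$.

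The one real idea is to transfer the inequality from the quotient back up to $E$. Let $\tilde A\subseteq E_i$ be the preimage of $A$ under $E_i\onto E_i/E_{i-1}$, so that $E_{i-1}\subseteq\tilde A\subseteq E_i$ and $\tilde A/E_{i-1}\cong A$; additivity of $Z$ on the short exact sequence $E_{i-1}\into\tilde A\onto A$ then gives $Z(\tilde A)=z_{i-1}+Z(A)$. Since $\tilde A$ is a subobject of $E$, the point $Z(\tilde A)$ lies in $\HN^Z(E)$.

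Now the convex geometry finishes the argument, and this is the step to get right (cf.\ fig.~\ref{fig:HNproofdetail2}): by the definition of polyhedral on the left, $\HN^Z(E)$ lies in the closed half-plane to the right of the oriented line from $z_{i-1}$ to $z_i$. Translating by $-z_{i-1}$, this says the vector $Z(A)=Z(\tilde A)-z_{i-1}$ lies on or to the right of the ray $\R_{>0}\cdot e^{i\pi\phi_i}$. Since $A\neq 0$ this vector is nonzero, with phase $\phi(A)\in(0,1]$, and since also $\phi_i\in(0,1]$ we have $\phi(A)-\phi_i\in(-1,1)$; on that range the condition "on or to the right of the ray" is exactly $\phi(A)\le\phi_i$, which is the desired semistability.

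The main obstacle — really the only thing beyond bookkeeping — is pinning down the orientation, i.e.\ confirming that "$Z(\tilde A)\in\HN^Z(E)$" constrains the phase of $A$ from above rather than from below. Concretely one checks that being on or to the right of the line means $\Im\!\bigl(e^{-i\pi\phi_i}Z(A)\bigr)\le 0$, and then uses that $\sin(\pi x)\le 0$ for $x\in(-1,0]$ while $\sin(\pi x)>0$ for $x\in(0,1)$. It is also worth remarking explicitly that "polyhedral on the left" is precisely what makes $\overline{z_{i-1}z_i}$ a genuine boundary edge of the convex set $\HN^Z(E)$, so that the line through $z_{i-1},z_i$ is a supporting line and the half-plane statement is legitimate.
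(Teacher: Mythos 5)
Your argument is correct and is essentially the paper's proof read forwards rather than by contradiction: where the paper assumes a destabilising $A$ with $E_{i-1}\subset A\subset E_i$ and observes that $Z(A)$ would land to the left of $\overline{z_{i-1}z_i}$, you lift a subobject of $E_i/E_{i-1}$ to its preimage in $E$ and apply the same supporting-half-plane constraint directly. The extra paragraph pinning down the orientation (the $\sin$ check) is a careful spelling-out of the phrase "lies to the left of" in the paper, not a different idea.
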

\begin{proof}
Otherwise, there is an object $A$ with $E_{i-1} \subset A \subset E_i$ such that
$A/E_{i-1}$ has bigger phase than $E_i/E_{i-1}$, see fig.~\ref{fig:HNproofdetail2}. It follows that $Z(A)$ lies to the left of the line
segment $\overline{z_{i-1}z_i}$. Since $A \subset E$ and hence $Z(A) \in \HN^Z(E)$, this is a
contradiction.
\end{proof}

\begin{proof}[Proof of Proposition \ref{prop:HNviapoly}]
The phase of $E_i/E_{i-1}$ is determined by the argument of $z_i - z_{i-1}$;
by convexity this shows $\phi(E_1/E_0) > \dots > \phi(E_m/E_{m-1})$, and so the
$E_i$ form a HN filtration.

Conversely, assume that we are given a HN filtration $0 = E_0 \into E_1 \into \dots \into E_m$
and a subobject $A \into E$.
We have to show that $Z(A)$ lies to the right of the 
path $z_0z_1 \dots z_m$ with vertices $z_i := Z(E_i)$. By induction on $m$, we may 
assume that $Z(A \cap E_{m-1})$ lies to the right of the path $z_0z_1 \dots z_{m-1}$. On the other hand,
$A/\left(A \cap E_{m-1}\right)$ is a subobject of $E_m/E_{m-1}$, which is semistable; thus its central charge
 $Z\bigl(A/\left(A \cap E_{m-1}\right)\bigr)$ lies to the right of the line segment from $0$ to $z_m - z_{m-1}$. 
Therefore, $Z(A) = Z(A \cap E_{m-1}) + Z(A/\left(A \cap E_{m-1}\right)$ lies to the right of the path
$z_0z_1\dots z_m$ as claimed.
\end{proof}

\begin{Cor} \label{cor:HNfromfinite}
Given $E \in \cA$, assume that there are only finitely many classes
$v(A)$ of subobjects $A \subset E$ with $\Re Z(A) < \max \left\{0, \Re Z(E) \right\}$. Then $E$ admits a HN filtration.
\end{Cor}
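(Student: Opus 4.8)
The plan is to deduce Corollary \ref{cor:HNfromfinite} from Proposition \ref{prop:HNviapoly}: I will show that under the finiteness hypothesis, the Harder--Narasimhan polygon $\HN^Z(E)$ is polyhedral on the left. Recall $\HN^Z(E)$ is the convex hull of the points $Z(A)$ for $A \subset E$. The key observation is that the shape of $\HN^Z(E)$ "to the left of the line through $0$ and $Z(E)$" is controlled entirely by those subobjects $A$ with $\Re Z(A) < \max\{0, \Re Z(E)\}$: after applying an element of $\wGL$ (equivalently, a real linear change of coordinates on $\C = \Lambda_\R^{(2)}$) we may assume $Z(E)$ is real and negative, or more robustly we simply note that the extremal points of $\HN^Z(E)$ lying strictly left of the line $(0\,Z(E))$ must have real part in the relevant range once we have normalized; so it is their images that determine whether the left boundary is a finite polygon.

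First I would handle the normalization. If $\Im Z(E) \ne 0$, pick a basis so that I compare against the segment from $0$ to $Z(E)$; a subobject $A$ contributes to the left boundary only if $Z(A)$ is not in the right half-plane relative to that segment. If $Z(E) = 0$, the line through $0$ and $Z(E)$ is not defined, but then "polyhedral on the left" should be interpreted via the constant imaginary-part direction; in any case $\Re Z(E) = 0$ and the hypothesis says there are finitely many classes $v(A)$ with $\Re Z(A) < 0$, which pins down the left side. Second, and this is the crux: I must pass from finitely many \emph{classes} $v(A) \in \Lambda$ to finitely many \emph{points} $Z(A) \in \C$ — which is immediate since $Z$ factors through $v$ — and then argue that the convex hull of $\HN^Z(E)$, intersected with the closed left half-plane, equals the convex hull of $\{0, Z(E)\}$ together with this finite set of points. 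The point is that any subobject $A$ with $Z(A)$ to the left of or on the line contributes a point with $\Re Z(A) \le \max\{0,\Re Z(E)\}$ (strict inequality unless $Z(A)$ lies on the reference line, where it does not affect the extremal structure on the left), so only finitely many distinct values of $Z(A)$ are relevant. The convex hull of finitely many points is a polygon, hence $\HN^Z(E)$ is polyhedral on the left, and Proposition \ref{prop:HNviapoly} gives the HN filtration.

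The main obstacle I anticipate is the boundary bookkeeping: making precise that subobjects $A$ with $\Re Z(A) = \max\{0, \Re Z(E)\}$ exactly (the borderline excluded by the strict inequality in the hypothesis), or subobjects whose $Z(A)$ lies \emph{on} the line through $0$ and $Z(E)$ rather than strictly left of it, genuinely do not contribute new extremal points to the left boundary. Concretely, a point on the reference segment is a convex combination of $0$ and $Z(E)$ and so is irrelevant; a point with $Z(A) = Z(E)$ but $A \ne E$ is likewise harmless. The degenerate case $\Im Z(E) = 0$ (so $Z(E) \in \R_{<0}$, or $Z(E) = 0$ with $E$ forced to be trivial or a nontrivial extension of phase-$1$ objects) needs a separate sentence each. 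Once these edge cases are dispatched, the finiteness of the relevant $Z(A)$ is purely formal and the rest is Proposition \ref{prop:HNviapoly}.
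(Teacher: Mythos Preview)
Your overall strategy---reduce to Proposition \ref{prop:HNviapoly} by showing that $\HN^Z(E)$ is polyhedral on the left---is correct, and is exactly what the paper intends (it states the corollary without proof). But there is a genuine gap in your execution, and it is not where you anticipate.

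You assert that any subobject $A$ with $Z(A)$ strictly to the left of the line $L$ through $0$ and $Z(E)$ satisfies $\Re Z(A) \le \max\{0, \Re Z(E)\}$. This is the crux of the argument, and it is not automatic: for an arbitrary point $z$ in the half-plane left of $L$ there is no upper bound on $\Re z$. What makes it true here is the constraint
\[
0 \le \Im Z(A) \le \Im Z(E),
\]
which you never invoke. The lower bound holds because $A \in \cA$; the upper bound because $E/A \in \cA$, so $\Im Z(E/A) \ge 0$. Once you restrict to this horizontal strip, it is a one-line check that the open half-plane left of $L$ meets the strip only where $\Re z < \max\{0, \Re Z(E)\}$: the two points of $L$ on the boundary of the strip are exactly $0$ and $Z(E)$, and everything left of $L$ inside the strip lies to the left of both. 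Hence only finitely many values $Z(A)$ lie left of $L$; since the strip also forces $\HN^Z(E) \cap L$ to be the segment from $0$ to $Z(E)$, the intersection of $\HN^Z(E)$ with the closed left half-plane is the convex hull of finitely many points, hence a polygon.

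Your proposed $\wGL$-normalisation is a red herring: the hypothesis is phrased in terms of the given real part $\Re Z$, and a real linear change of coordinates replaces it by a different linear inequality, so you cannot simultaneously arrange $Z(E) \in \R_{<0}$ and keep the hypothesis in the form you want. Likewise, the boundary bookkeeping you flag as the main obstacle is a non-issue once the strip constraint is in place: points on $L$ already lie on the segment from $0$ to $Z(E)$ and contribute no new extremal vertices, and the case $\Im Z(E) = 0$ forces every $Z(A)$ onto the non-positive real axis, so $E$ is semistable of phase $1$ and the HN filtration is trivial.
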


\section{Linear algebra Lemmas}
\label{sec:linalg}

Throughout Sections \ref{sec:linalg}, \ref{sec:realvariation} and \ref{sec:Qpreserved}, we fix a quadratic form $Q$ on
$\Lambda_\R$ with:
\begin{Ass} \label{ass:nondeg}
The quadratic form $Q$ has signature $(2, \rk \Lambda - 2)$. 
\end{Ass}

\begin{Lem} \label{lem:coords}
Let $Z \colon \Lambda \to \C$ be a group homomorphism such that $Q$ is negative definite on 
$K := \Ker Z \subset \Lambda_\R$. Let $\norm{\cdot}$ be the norm on $K$ associated to $-Q|_K$, and
let $p \colon \Lambda_\R \to K$ be the orthogonal projection with respect to $Q$. 
After replacing $Z$ by a $\GL_2^+(\R)$-translate,  we have
\begin{equation} \label{eq:QandZ}
 Q(v) = \abs{Z(v)}^2 - \norm{p(v)}^2.\end{equation}
\end{Lem}
\begin{proof}
Let $K^\perp$ be the orthogonal complement of $K$. 
As $Z|_{K^\perp}$ is injective, Assumption \ref{ass:nondeg} can only hold
if $Q$ is positive definite on $K^\perp$, and  if  
$ Z|_{K^\perp} \colon K^\perp \to \C $ is an isomorphism.
Up to the $\GL_2^+(\R)$-action, we may assume this to be an
isometry. Then the the claim follows.
\end{proof}

\begin{Rem} In \cite{Bridgeland:K3}, this normalisation is used with the Mukai quadratic form replacing $Q$.
\end{Rem}

Consider the subset in $\Hom(\Lambda, \C)$ of central charges whose kernel is
negative definite with respect to $Q$; let $\cP_Z(Q)$ be its connected component containing $Z$.
\begin{Lem} \label{lem:deformincoords}
Let $Q$, $Z$ and $K$ be as in Lemma \ref{lem:coords}.
\begin{enumerate} \item \label{item:ucomplex}
For each $Z' \in \cP_Z(Q)$ there exists a unique  $g \in \GL_2^+(\R)$ and a linear map
 $u \colon K \to \C$ with $\norm{u} < 1$ such that
\[
 g Z' = Z + u \circ p. \]
\item \label{item:urealuimaginary} Up to the action of $\GL_2^+(\R)$, we can break up this deformation of $Z$ into a pure real and a purely imaginary part with analogous properties: there exist $u_\R$, $u_{i\R}$, $Z_1$ and $g_1, g_2 \in \GL_2^+(\R)$, depending continuously on $Z'$, such that
\begin{enumerate}
	\item $u_\R \colon K \to \R$ and  $u_{i\R} \colon \Ker Z_1 \to \R$ satisfy $\norm{u_\R} < 1$, $\norm{u_{i\R}} < 1$,
	\item $Z_1 := Z + u_\R \circ p$, 
	\item \label{enum:guZ1satisfiesnormalisation}
	equation \eqref{eq:QandZ} holds with $Z$ and $p$ replaced by $g_1 Z_1$ and  $p_1 \colon \Lambda_\R \to \Ker Z_1$, and
	\item \label{enum:imaginaryu}
	$g_2 Z' = g_1 Z_1 +  i u_{i\R} \circ p_1$, where $p_1 \colon \Lambda_\R \to \Ker Z_1$ is the orthogonal projection.
\end{enumerate} 
\end{enumerate}
\end{Lem}
\begin{proof}
The restriction of $Z'$ to the orthogonal complement $K^\perp$ is an isomorphism for all $Z' \in \cP_Z(Q)$. Hence there exists
a unique $g \in \GL_2(\R)$ such that $gZ'|_{K^\perp} = Z|_{K^\perp}$; since $\cP_Z(Q)$ is connected,
 in fact  $g \in \GL_2^+(\R)$.
Let  $u:= gZ'|_{\Ker Z}$; then $g Z' - Z = u \circ p$ holds both on $K$ and $K^\perp$, and
thus on all of $\Lambda_\R$.

Next, we prove $\norm{u} < 1$. Otherwise, let $v \in K$ with $\norm{v} =
1$ and $\abs{u(v)} \ge 1$, and let $\overline{v} \in K^\perp$ be such that $Z(\overline{v}) = u(v)$.
Then $Z'(v - \overline{v}) = 0$, but $Q(v - \overline{v}) = \abs{u(v)}^2 - \norm{v}^2 \ge 0$ in
contradiction to $Q$ being negative definite on $\Ker Z'$.
This completes the proof of \eqref{item:ucomplex}.

Now let $ u_\R := \Re u$, and let $Z_1$ be as above. If we write $\Lambda_\R = K \oplus K^\perp$ and
identify $K^\perp$ with $\C$ via $Z$, then $\Ker Z_1$ is the graph of $-u_\R$, and its orthogonal complement is the graph of
$\C \to K, z \mapsto \Re z \cdot u^\vee$, where $u^\vee$ corresponds to $u$ under the identification $K \cong K^\vee$ induced
by the symmetric form associated to $\norm{\cdot}$. A straightforward computation shows that
$g_1 \circ Z_1$ induces an isometry $(\Ker Z_1)^\perp \to \C$  for $g_1 := \begin{pmatrix}
\left(1 + \norm{u_\R}^2\right)^{-\frac 12} & 0 \\ 0 & 1
\end{pmatrix}\in \GL_2^+(\R)$;
by the proof of Lemma \ref{lem:coords} this implies
\eqref{enum:guZ1satisfiesnormalisation}. Moreover, $g_1 g Z' - g_1 Z_1 = g_1 \circ \Im u \circ p$ is completely imaginary. Applying part \eqref{item:ucomplex} to $g_1 g Z'$ and $g_1 Z_1$ then shows part \eqref{enum:imaginaryu}.
\end{proof}

\section{Real variations of the central charge} \label{sec:realvariation}
The key lemma, proved in this section and the next, treats the case
where only the real part of the central charge is varying:
\begin{Lem} \label{lem:realvariation}
Consider a stability condition $\sigma = (Z, \cP)$, satisfying the support property with respect to $Q$, and assume that  \eqref{eq:QandZ} holds with $p$ and $\norm{\cdot}$ as defined  in Lemma \ref{lem:coords}. Given $u \colon \Ker Z \to \R$ with
$\norm{u} < 1$, there is a stability condition $\tau = (W, \cQ)$ 
with $W = Z +  u \circ p$, satisfying the support property with
respect to $Q$, and with $d(\cP, \cQ) < \frac {\norm{u}}2$.
\end{Lem}

\begin{proof}[Proof of Theorem \ref{thm:mainthm}, assuming Lemma \ref{lem:realvariation} and
	Assumption \ref{ass:nondeg}]
Lemma \ref{lem:realvariation} automatically also applies to purely imaginary variations
of the central charge as in Lemma \ref{lem:deformincoords}.\eqref{enum:imaginaryu}, due to the  $\wGL$-action. We may assume that $Q$ and the central charge satisfy \eqref{eq:QandZ};  then Lemmas \ref{lem:deformincoords} and
\ref{lem:realvariation} combined with the $\wGL$-action give a set-theoretic section $\cP_Z(Q) \to \Stab_\Lambda(\cD)$.


Since $\GL_2^+(\R)$ acts continuously on the compact set $S^1$, there exists an open neighbourhood $B_\epsilon(1)$ of
$1 \in \wGL$ such that $\abs{\tilde g.\phi - \phi} < \epsilon$ for all $\tilde g \in B_\epsilon(1)$.  It follows that
if $(\cQ, W) = \tilde g.(\cP, Z)$, then $d(\cQ, \cP) < \epsilon$. 
Then there is an open neighborhood $U_\epsilon(Z)$ of $Z$ in $\cP_Z(Q)$
where, in the notation of Lemma \ref{lem:deformincoords}, $\norm{u_{\R}} < \epsilon, \norm{u_{i\R}} < \epsilon$, and
$g_1, g_2$ lift to $\tilde g_1, \tilde g_2 \in B_\epsilon(1)$. Then our lift $\tau = (Z, \cQ)$ of any $Z'$ in this open neighbourhood satisfies $d(\cQ, \cP) < \frac{\epsilon}{2} + \epsilon + \frac{\epsilon}{2} + \epsilon$.

By Corollary \ref{cor:continuoussection}, our section is continuous on $U_\epsilon(Z)$ for $\epsilon \le \frac 1{12}$. Since the neighbourhoods $U_{\frac 1{12}}(Z')$ for $Z' \in \cP_Z(\cQ)$ cover $\cP_Z(\cQ)$, and since the lifts constructed on each such neighbourhood  agree on the overlaps by the local injectivity of $\cZ$, this proves the Theorem.
\end{proof}

Lemma \ref{lem:realvariation} is simpler to prove since it allows (and forces) us to  leave the heart $\cA := \cP(0, 1]$ unchanged: we
will apply Proposition \ref{prop:stabviaheart} and prove
that $(\cA, W)$ produces a stability condition.
\begin{Lem} In the situation of Lemma \ref{lem:realvariation},  $W = Z + u \circ p$ is a stability
function on $\cA$.
\end{Lem}
\begin{proof} Consider $E \in \cA$; if $\Im Z(E) = \Im W(E) > 0$, there is nothing to prove.
Otherwise, $E$ is semistable with $Z(E) \in \R_{<0}$ and thus $\norm{p(E)} \le - Z(E)$.  From
 $\norm{u} < 1$ we conclude
\[
W(E) = Z(E) + u\circ p(E) \le Z(E) + \norm{u} \norm{p(E)}  < Z(E) - Z(E) = 0. \qedhere
\] 
\end{proof}

We will use Proposition \ref{prop:HNviapoly} and Corollary \ref{cor:HNfromfinite} to prove that $(\cA, W)$ satisfies the HN property.

Let us define the \emph{mass} $m^Z(E)$ of $E \in \cA$ with respect to $Z$ as the length of the boundary
of $\HN^Z(E)$ on the left between $0$ and $Z(E)$.

\begin{Lem} \label{lem:boundZ}
For all $E \in \cA$ we have
$\norm{p(E)} \le m^Z(E)$.
\end{Lem}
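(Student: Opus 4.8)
The plan is to control $\norm{p(E)}$ by a sum of $\norm{p(\cdot)}$-contributions over the HN factors of $E$ with respect to $Z$, and then to bound each such contribution by the length of the corresponding edge of the left boundary of $\HN^Z(E)$. Since $p$ is linear and factors through $\Ker Z$, additivity of $p$ along short exact sequences gives $p(E) = \sum_i p(A_i)$ where $A_1, \dots, A_r$ are the $Z$-semistable HN factors of $E$, so by the triangle inequality for $\norm{\cdot}$ it suffices to show $\norm{p(A_i)} \le (\text{length of the } i\text{-th edge})$, i.e. $\norm{p(A)} \le \abs{Z(A)}$ for each $Z$-semistable $A$. Indeed, the left boundary of $\HN^Z(E)$ between $0$ and $Z(E)$ is by Proposition \ref{prop:HNviapoly} the concatenation of the segments from $z_{i-1}$ to $z_i$, each of length $\abs{z_i - z_{i-1}} = \abs{Z(A_i)}$, so $m^Z(E) = \sum_i \abs{Z(A_i)}$.

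So I reduce to the claim: for $E$ (or $A$) that is $Z$-semistable in $\cA$, $\norm{p(E)} \le \abs{Z(E)}$. This is exactly where the support property for $\sigma = (Z, \cP)$ enters: a $Z$-semistable object lies in some $\cP(\phi)$, hence $Q(v(E)) \ge 0$ by Definition \ref{def:supportproperty}\eqref{item:QEnonneg}. Now invoke the normalised coordinates of Lemma \ref{lem:coords}: $Q(v) = \abs{Z(v)}^2 - \norm{p(v)}^2$. Thus $Q(v(E)) \ge 0$ is literally the inequality $\norm{p(E)}^2 \le \abs{Z(E)}^2$, which gives $\norm{p(E)} \le \abs{Z(E)}$. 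Summing over HN factors and using the triangle inequality then yields $\norm{p(E)} = \norm{\sum_i p(A_i)} \le \sum_i \norm{p(A_i)} \le \sum_i \abs{Z(A_i)} = m^Z(E)$.

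One small point to be careful about: the bound $\norm{p(E)} \le m^Z(E)$ is asserted for \emph{all} $E \in \cA$, and the argument above presumes $E$ has an HN filtration with respect to $Z$ so that $m^Z(E)$ makes sense and the left boundary decomposes into the claimed edges. If at this stage of the paper the HN property for $(\cA, Z)$ is already known (it is, since $\sigma = (Z, \cP)$ is a stability condition and $\cA = \cP(0,1]$), there is no issue; otherwise one can argue directly with the polygon $\HN^Z(E)$ without reference to a filtration, using that for any subobject $A \subset E$ the point $Z(A)$ lies in $\HN^Z(E)$, and that the extremal left-boundary vertices $z_0, \dots, z_m$ are realised by subobjects whose successive quotients are $Z$-semistable. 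I expect the only genuine content is the reduction to semistable objects together with the clean identification of $Q \ge 0$ with $\norm{p(\cdot)} \le \abs{Z(\cdot)}$ in the coordinates of Lemma \ref{lem:coords}; everything else is additivity of $p$ and the triangle inequality, so there is no serious obstacle here — the lemma is really just repackaging the support property in geometric terms suited to the HN polygon.
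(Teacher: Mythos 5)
Your proof is correct and is essentially the same as the paper's: reduce to the semistable case via additivity of $p$ along the HN filtration and the triangle inequality, then use the support property $Q(v(E)) \ge 0$ rewritten in the coordinates of Lemma \ref{lem:coords} as $\norm{p(E)} \le \abs{Z(E)} = m^Z(E)$, and finally note that $\sum_i \abs{Z(A_i)}$ is exactly the length of the left boundary of $\HN^Z(E)$.
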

\begin{proof}
If $E$ is semistable, then
$0 \le Q(E) = \abs{Z(E)}^2 - \norm{p(E)}^2
= \left(m^Z(E)\right)^2 - \norm{p(E)}^2$, which is exactly the claim. 
Otherwise, consider the HN filtration $E_0 \into E_1
\into \dots \into E_m = E$. Combined with the triangle inequality, this gives
\[
\norm{p(E)} \le \sum_i \norm{p(E_i/E_{i-1})} \le \sum_i \abs{Z(E_i/E_{i-1})}
= \sum_i \abs{Z(E_i) - Z(E_{i-1})} = m^Z(E). \qedhere
\]
\end{proof}

The following Lemma needs no proof:
\begin{Lem} \label{lem:subHNpoly}
If $A \subset E$, then $\HN^Z(A) \subset \HN^Z(E)$.
\end{Lem}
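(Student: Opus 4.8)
The plan is to unwind the definition of the Harder-Narasimhan polygon and use that ``subobject of a subobject is a subobject''. Recall that $\HN^Z(A)$ is defined as the convex hull of the set $\{Z(B) : B \subset A\}$ of central charges of all subobjects of $A$ (including the trivial ones $0$ and $A$), and likewise $\HN^Z(E)$ is the convex hull of $\{Z(B') : B' \subset E\}$.

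The key step is the observation that if $B \subset A$ and $A \subset E$, then $B \subset E$; that is, the inclusion of subobject posets $\{B : B \subset A\} \hookrightarrow \{B' : B' \subset E\}$ holds. Consequently the set of points $\{Z(B) : B \subset A\}$ is a subset of $\{Z(B') : B' \subset E\}$. Taking convex hulls is monotone with respect to inclusion: if $S_1 \subseteq S_2$ then $\mathrm{conv}(S_1) \subseteq \mathrm{conv}(S_2)$. Applying this with $S_1 = \{Z(B) : B \subset A\}$ and $S_2 = \{Z(B') : B' \subset E\}$ yields $\HN^Z(A) \subseteq \HN^Z(E)$, which is the claim.

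There is essentially no obstacle here — this is exactly why the author writes ``The following Lemma needs no proof''. The only point worth a half-sentence of care is that one should use the \emph{categorical} notion of subobject (so that $B \subset A \subset E$ really does give a monomorphism $B \hookrightarrow E$ up to the equivalence defining subobjects), but this is automatic in the abelian category $\cA$: composition of monomorphisms is a monomorphism. Hence the one-line argument above suffices.
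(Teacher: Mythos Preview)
Your argument is correct and is precisely the trivial observation the paper has in mind: the paper gives no proof at all (``The following Lemma needs no proof''), and your one-line reasoning---subobjects of $A$ are subobjects of $E$, hence the generating sets satisfy the inclusion, and convex hull is monotone---is exactly the intended justification.
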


\begin{Lem} \label{lem:boundlength}
Given any subobject $A \subset E$, we have
\[ m^Z(A) - \Re Z(A) \le m^Z(E) - \Re Z(E). \]
\end{Lem}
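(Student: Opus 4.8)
The quantity $m^Z(E) - \Re Z(E)$ measures the length of the left boundary of $\HN^Z(E)$ minus the horizontal displacement from $0$ to $Z(E)$; geometrically it records how far to the left the polygon bulges, and it should be monotone under passing to subobjects. The plan is to prove this directly from the combinatorial description of the left boundaries, using Lemma \ref{lem:subHNpoly} to compare the two polygons. First I would reduce to the case where $E$ and $A$ have HN filtrations, so that the left boundary of each is an actual finite polygonal path with vertices at the images of the HN subobjects (if the HN filtration does not exist, $m^Z$ is the length of the left boundary of the convex hull in the sense of a supremum over sub-polygons, and the inequality follows by a limiting argument; alternatively one can note the lemma is only applied in a context where finiteness is available). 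Denote the left boundary of $\HN^Z(A)$ by the path $a_0 = 0, a_1, \dots, a_k = Z(A)$ and that of $\HN^Z(E)$ by $z_0 = 0, z_1, \dots, z_m = Z(E)$; both are convex, i.e. the slopes (phases) are strictly decreasing along each path.

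The key step is the following: since $\HN^Z(A) \subset \HN^Z(E)$ by Lemma \ref{lem:subHNpoly}, the left boundary of $A$ lies (weakly) to the right of, or on, the left boundary of $E$. I want to compare the "excess length over horizontal displacement" of two convex paths, one of which lies to the right of the other and shares its starting point $0$ but ends at a point $Z(A)$ which is itself a point of $\HN^Z(E)$, hence to the right of the left boundary of $E$. The cleanest way is a direct convexity/rearrangement estimate: for any convex path from $0$ to a point $w$, its length equals $\sum_j |w_j - w_{j-1}|$, and the "left excess" $\sum_j |w_j - w_{j-1}| - \Re(w)= \sum_j\big(|w_j - w_{j-1}| - \Re(w_j - w_{j-1})\big)$ is a sum of nonnegative terms $|\zeta| - \Re\zeta$ over the edge vectors $\zeta$. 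So I would like to produce, from the edges of the $A$-path, a collection of vectors whose sum is $Z(E)$ and whose contributions to $\sum(|\zeta| - \Re\zeta)$ are no larger than the $E$-path's. Concretely: extend the $A$-path by appending a single segment from $Z(A)$ straight to $Z(E)$; this gives a path from $0$ to $Z(E)$ inside $\HN^Z(E)$ whose left excess is $m^Z(A) - \Re Z(A) + (|Z(E)-Z(A)| - \Re(Z(E) - Z(A)))$, and I must show the extra term is dominated appropriately — this is not automatic, so instead I would argue that \emph{any} path from $0$ to $Z(E)$ staying inside the convex region $\HN^Z(E)$ and to the right of its left boundary has left excess at least as small as that path, because the left boundary of $\HN^Z(E)$ is precisely the path minimizing $\Re(\text{endpoint}) $ for fixed... — more cleanly: the function "left excess of the left boundary" of a convex polygon $P$ with $0$ and $w$ among its vertices equals $2\cdot(\text{horizontal distance from }w\text{-}0\text{ line to the leftmost extent})$ only in special cases, so the robust statement is that $m^Z(E) - \Re Z(E) = \ell(\partial_{\text{left}}\HN^Z(E))$ where $\partial_{\text{left}}$ is measured against the chord, and this is monotone in the polygon under inclusion when the endpoints are comparable.

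To make the last point rigorous without hand-waving, I would prove directly: \emph{if $P \subseteq P'$ are convex sets in $\C$, both containing $0$, with $0$ and $w \in P$, $0$ and $w' \in P'$ extremal, and with the left boundary of $P$ lying to the right of the left boundary of $P'$, then the left-excess of $P$ is at most that of $P'$.} For this, decompose the left boundary of $P$ edge by edge; each edge vector $\zeta$ has $|\zeta| - \Re\zeta = 2|\zeta|\sin^2(\arg(\zeta)/2)$, which is increasing in $|\zeta|$ and in the "leftward-pointing-ness" $\arg\zeta$. Convexity of both paths and the containment let me majorize the $P$-path's edge data by the $P'$-path's via the standard fact that a convex path inside a convex region, from the bottom vertex to a point weakly right of the region's left boundary, is shorter-in-excess than that left boundary. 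The cleanest formal route is probably: reduce to $A \subset E$ with $A, E$ both semistable-or-filtered, apply Lemma \ref{lem:subHNpoly}, and then invoke the elementary planar fact that for nested convex polygons the perimeter of the inner along its left boundary minus chord is bounded by that of the outer. \textbf{The main obstacle} I anticipate is exactly this last planar-convexity estimate: it is "geometrically obvious" from a picture but needs a careful argument because $Z(A)$ need not lie on the left boundary of $\HN^Z(E)$, so one is genuinely comparing a path with different endpoints, and one must use that moving the endpoint $Z(A)$ to the \emph{right} (toward $Z(E)$, since $\Re Z(A) \le \Re Z(E)$ is itself something to extract from the hypotheses or the support-property setup) only decreases the left excess — establishing this monotonicity of left-excess under both shrinking the polygon and rightward-moving the far endpoint is the heart of the matter.
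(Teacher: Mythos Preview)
Your plan has the right instinct---use $\HN^Z(A)\subset\HN^Z(E)$ and convexity to compare the two left boundaries---but it does not actually close the gap you yourself flag, and it rests on a false expectation. You hope to reduce to $\Re Z(A)\le\Re Z(E)$ and then argue that moving the endpoint rightward only decreases the left excess; but $\Re Z(A)\le\Re Z(E)$ is \emph{not} true in general (take $E$ of phase close to $1$ and a subobject $A$ of small phase, so that $Z(A)$ points far to the right of $Z(E)$). Your fallback, ``majorize the $P$-path's edge data by the $P'$-path's,'' is not made precise, and the appended segment $Z(A)\to Z(E)$ contributes an uncontrolled term $|Z(E)-Z(A)|-\Re(Z(E)-Z(A))$, as you note.

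The paper's proof avoids all of this with a single trick that you are missing: extend \emph{both} left-boundary paths by horizontal segments out to a common vertical line $\Re = x$ with $x>\max\{\Re Z(A),\Re Z(E)\}$. The extended paths $\gamma_A,\gamma_E$ then have lengths exactly $m^Z(A)+x-\Re Z(A)$ and $m^Z(E)+x-\Re Z(E)$, so the desired inequality becomes simply $|\gamma_A|\le|\gamma_E|$. This last inequality is now a clean nested-convex-paths comparison: since $\Im Z(A)\le\Im Z(E)$, an intermediate path $\gamma_I$ that follows $\partial\HN^Z(E)$ up to height $\Im Z(A)$ and then goes horizontally to the line $\Re=x$ satisfies $|\gamma_A|\le|\gamma_I|\le|\gamma_E|$ by convexity and Lemma~\ref{lem:subHNpoly}. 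The horizontal extension is precisely what lets you compare paths with genuinely different endpoints without ever needing $\Re Z(A)\le\Re Z(E)$.
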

\begin{proof}
This follows from the previous Lemma, convexity and a picture, see
fig.~\ref{fig:bound-subobject}. Indeed, choose $x > \Re Z(A), \Re Z(E)$; let $a = x + i \Im Z(A)$ and
$e = x + i \Im Z(E)$. Let $\gamma_A$ be the path that follows the boundary of $\HN^Z(A)$ from $0$ to
$Z(A)$, and then continues horizontally to $a$; similarly $\gamma_E$ follows the boundary of
$\HN^Z(E)$ and then continues to $e$. Their lengths are given as
\[ \abs{\gamma_A} = m^Z(A) + x - \Re Z(A), \quad \abs{\gamma_E} = m^Z(E) + x - \Re Z(E).
\]
On the other hand, convexity and Lemma \ref{lem:subHNpoly} imply $\abs{\gamma_A} \le
\abs{\gamma_E}$; for example, if $\gamma_I$ denotes the intermediate path that follows the boundary
of $\HN^Z(E)$ up to height $\Im Z(A)$ and then goes horizontally to $a$, we clearly have
$\abs{\gamma_A} \le  \abs{\gamma_I} \le  \abs{\gamma_E}$.
\end{proof}

\begin{figure}
\centering
\begin{minipage}{0.45\textwidth}
\centering
        \includegraphics{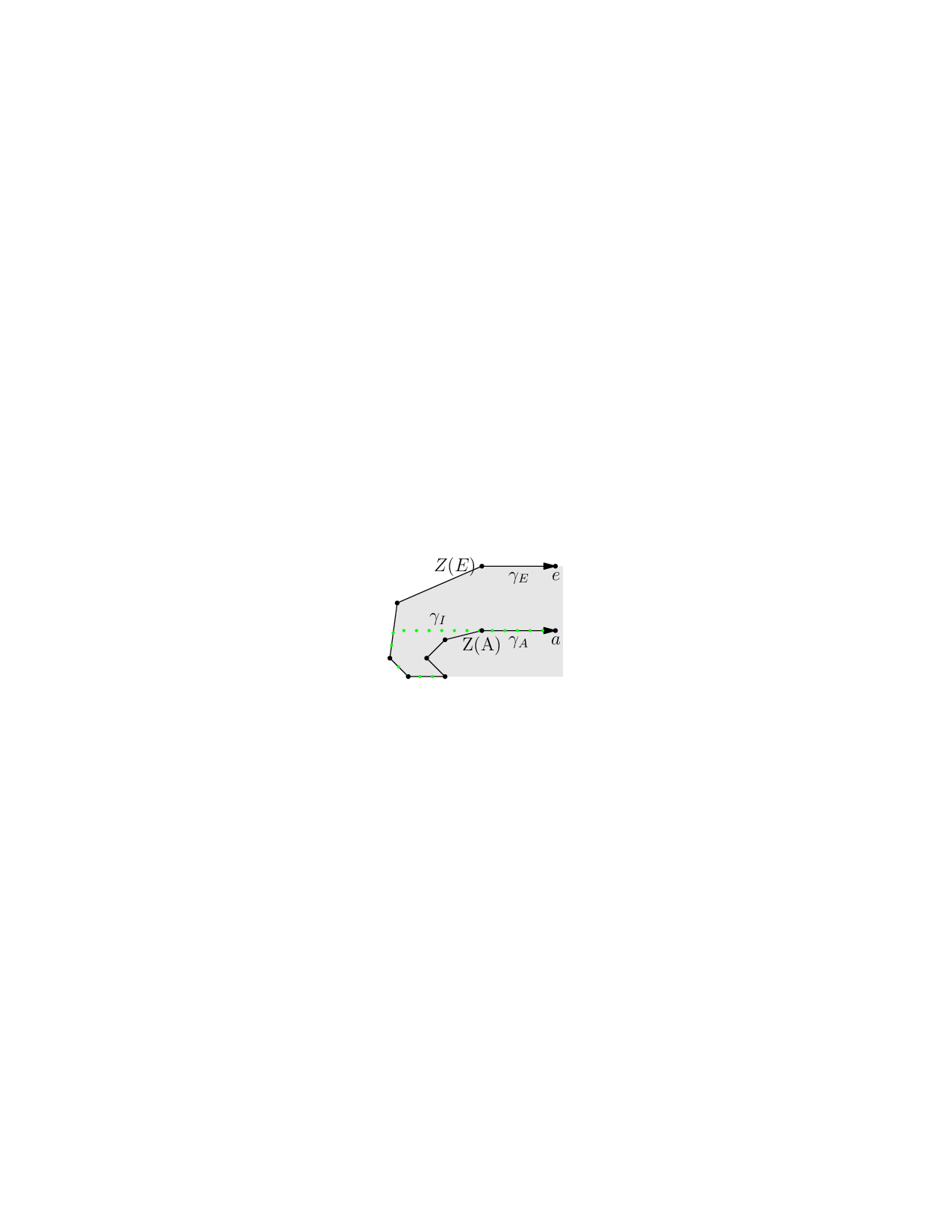}
    \captionof{figure}{Proof of Lemma \ref{lem:boundlength}}
    \label{fig:bound-subobject}
\end{minipage}
\begin{minipage}{0.45\textwidth}
	\centering
        \includegraphics{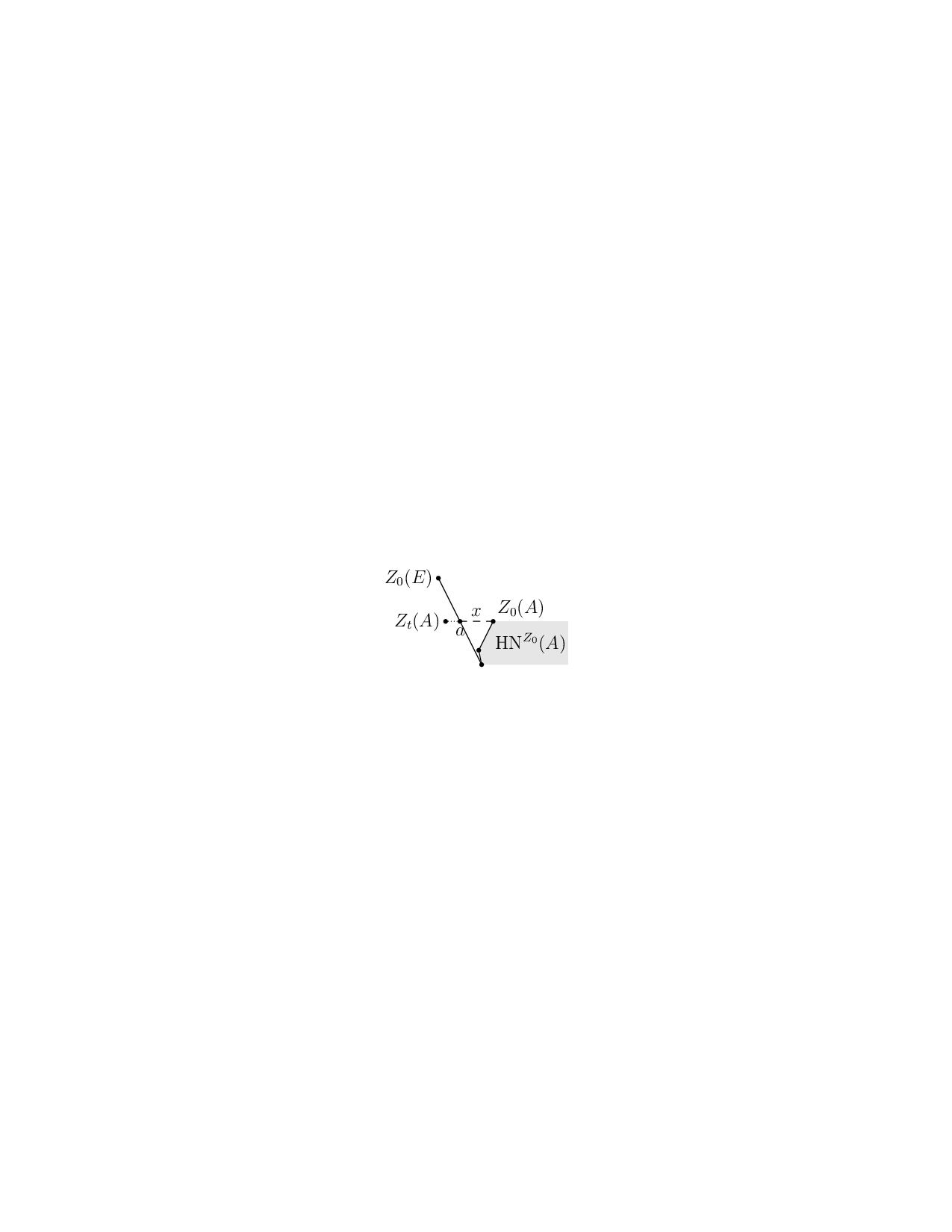}
    \captionof{figure}{Proof of Lemma \ref{lem:bounddistance}}
    \label{fig:bounddistance}
\end{minipage}
\end{figure}

\begin{Lem}\label{lem:finitesubobjectsrealpart} Given $C \in \R$, the set of  $v(A) \in \Lambda$ for subobjects $A \subset E$ with
	$\Re W(A) < C$ is finite.
\end{Lem}
\begin{proof}
Given any such $A$, we 
use Lemmas \ref{lem:boundZ} and \ref{lem:boundlength} to obtain
\begin{align*}
C  > \Re W(A) \ge \Re Z(A) - \norm{u} \norm{p(A)}
& > (1 - \norm{u})\Re Z(A) - \norm{u} \left(m^Z(A) - \Re Z(A)\right) \\
& \ge (1 - \norm{u})\Re Z(A) - \norm{u} \left(m^Z(E) - \Re Z(E)\right).
\end{align*}
Since $\norm{u} < 1$, this bounds $\Re Z(A)$ from above.
On the other hand, $Z(A) \in \HN^Z(E)$, and thus $Z(A)$ is constrained to lie in a compact region of $\C$.
Using Lemmas \ref{lem:boundlength} and \ref{lem:boundZ} again, this gives an upper bound first for $m^Z(A)$
and consequently for $\norm{p(A)}$. Hence $v(A)$ is contained in a compact
region of $\Lambda \otimes \R$ depending only on $E$ and $C$, and the claim follows.
\end{proof}

Lemma \ref{lem:finitesubobjectsrealpart} and Corollary \ref{cor:HNfromfinite} imply the existence of HN filtrations for
$W$ on $\cA$, and thus yield a pre-stability condition
$\tau = (\cA, W)$ by Proposition \ref{prop:stabviaheart}; write $\cQ$ for the associated slicing.
\begin{Lem} \label{lem:bounddistance}
The pre-stability condition $\tau = (W, \cQ)$ satisfies
$d(\cP, \cQ) < \frac {\norm{u}}2$.
\end{Lem}
\begin{proof}
	We apply Lemma \ref{lem:generaldistanceboundfromZ}: by construction, by \eqref{eq:QandZ}, and by the
	assumption $\norm{u} < 1$ we have
		\[ 
		\frac{\abs{W(v)-Z(v)}}{\abs{Z(v)}} = \frac{\abs{u \circ p(v)}}{\abs{Z(v)}} \le \norm{u} \frac{\norm{p(v)}}{\abs{Z(v)}} \le \norm{u} < \sin \pi\frac{\norm{u}}{2} \ \text{for $v \in \Lambda$ with $Q(v) \ge 0$.} \qedhere
	\]
\end{proof}

\section{The support property is preserved}
\label{sec:Qpreserved}

It remains to show that $(\cA, W)$ satisfies the support property
with respect to $Q$, i.e.~$Q(v(E)) \ge 0$ for all $W$-stable $E \in \cA$. The basic reason is that this inequality preserved by wall-crossing:

\begin{Lem} \label{lem:Qpreserved}
Let $\sigma = (Z, \cP)$ be pre-stability condition.
Assume that $Q$ is a non-degenerate quadratic form on $\Lambda_\R$ of signature $(2, \rk \Lambda-2)$ such
that $Q$ is negative definite on $\Ker Z$. If $E$ is strictly $\sigma$-semistable and admits a Jordan-H\"older filtration with factors $E_1, \dots, E_m$, and if $Q(v(E_i)) \ge 0$ for $i = 1, \dots, m$, then
$Q(v(E)) \ge 0$.
\end{Lem}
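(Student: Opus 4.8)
The key point is that all the $\sigma$-semistable factors $E_1, \dots, E_m$ have the \emph{same phase} as $E$, so the complex numbers $Z(E_i)$ all lie on a single ray $\R_{>0} \cdot e^{i\pi\phi}$ emanating from the origin; in particular $Z(E) = \sum_i Z(E_i)$ forces $\abs{Z(E)} = \sum_i \abs{Z(E_i)}$, with no cancellation. So the plan is to expand $Q(v(E))$ using the polarization of $Q$: writing $v = v(E) = \sum_i v(E_i)$, we have
\[ Q(v) = \sum_i Q(v(E_i)) + \sum_{i \neq j} B(v(E_i), v(E_j)), \]
where $B$ is the symmetric bilinear form associated to $Q$. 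The first sum is $\ge 0$ by hypothesis, so it suffices to show the cross terms are $\le 0$ (and then, for the equality $Q(v(E)) = 0$, that everything is forced to vanish). This is where the signature assumption and the negative-definiteness on $\Ker Z$ enter.

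\textbf{The Hodge-index-style input.} The main technical step is the following claim: if $x, y \in \Lambda_\R$ satisfy $Q(x), Q(y) \ge 0$ and $Z(x), Z(y)$ lie on a common ray through the origin (so that $Z(x)$ and $Z(y)$ are positive real multiples of one another), then $B(x, y) \ge \sqrt{Q(x)Q(y)} \ge 0$, with the first inequality an equality. Indeed, because $Q$ has signature $(2, \rk\Lambda - 2)$ and is negative definite on the codimension-two subspace $\Ker Z$, the form $Q$ restricted to any two-dimensional subspace on which $Z$ is injective — in particular to a subspace meeting $\Ker Z$ trivially — need not be controlled, but one can argue as follows using Lemma \ref{lem:coords}: after applying $\wGL$ we may assume $Q(v) = \abs{Z(v)}^2 - \norm{p(v)}^2$ with $p$ the orthogonal projection to $\Ker Z$. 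Then $B(x,y) = \Real\bigl(Z(x)\overline{Z(y)}\bigr) - \langle p(x), p(y)\rangle$. Since $Z(x), Z(y)$ are on a common ray, $\Real(Z(x)\overline{Z(y)}) = \abs{Z(x)}\,\abs{Z(y)}$. By Cauchy--Schwarz, $\langle p(x), p(y)\rangle \le \norm{p(x)}\norm{p(y)}$. Combining with $\norm{p(x)} \le \abs{Z(x)}$ and $\norm{p(y)} \le \abs{Z(y)}$ (which are exactly the inequalities $Q(x), Q(y) \ge 0$) gives
\[ B(x,y) \ge \abs{Z(x)}\abs{Z(y)} - \norm{p(x)}\norm{p(y)} \ge 0. \]

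\textbf{Assembling the conclusion.} Applying this to each pair $x = v(E_i), y = v(E_j)$ (all of whose central charges lie on the common ray of phase $\phi$) shows every cross term $B(v(E_i), v(E_j)) \ge 0$, hence $Q(v(E)) \ge \sum_i Q(v(E_i)) \ge 0$. For the reverse inequality $Q(v(E)) \le 0$: note $p(v(E)) = \sum_i p(v(E_i))$, so $\norm{p(v(E))} \le \sum_i \norm{p(v(E_i))} \le \sum_i \abs{Z(E_i)} = \abs{Z(E)}$ by the no-cancellation property; but this only re-proves $Q(v(E)) \ge 0$. To force equality we instead observe that $v(E) \in \Ker Z$ is impossible unless $v(E) = 0$ is allowed — more precisely, the claimed statement $Q(v(E)) = 0$ should follow by a \emph{strictness} argument. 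The honest way: since $\sigma$ is a stability condition, it satisfies the support property with respect to \emph{some} form $Q_0$; but that isn't quite what we want either. The cleanest route, which I expect the author takes, is: the object $E$ is strictly semistable, so one of the factors, say $E_1$, is a proper subobject/quotient with $Z(E_1)$ a proper sub-segment of the segment $\overline{0\,Z(E)}$; iterating, one reduces to the two-term case $m = 2$, and there the equality $Q(v(E)) = Q(v(E_1)) + Q(v(E_2)) + 2B(v(E_1), v(E_2))$ combined with $0 \le Q(v(E)) = \abs{Z(E)}^2 - \norm{p(v(E))}^2$ and the already-established $\norm{p(v(E))} \le \abs{Z(E)}$ pins things down only if additionally $Q(v(E)) \le 0$.

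\textbf{Expected main obstacle.} The one genuinely delicate point is obtaining $Q(v(E)) \le 0$ rather than just $\ge 0$; the inequality $\ge 0$ is the easy direction and follows from the above convexity/triangle-inequality bookkeeping, but the equality requires using that $E$ is \emph{strictly} semistable, i.e. that the decomposition $Z(E) = \sum Z(E_i)$ is non-trivial, together with non-degeneracy of $Q$. I expect the argument to run: if $Q(v(E)) > 0$, then $v(E)$ together with $\Ker Z$ would span a subspace on which $Q$ is positive somewhere and negative on a codimension-one piece, which is compatible with signature $(2, \rk\Lambda - 2)$ — so the contradiction must instead come from the ray condition forcing each $Q(v(E_i)) = 0$ and each $\langle p(v(E_i)), p(v(E_j))\rangle = \norm{p(v(E_i))}\norm{p(v(E_j))}$, i.e. all $p(v(E_i))$ proportional, whence $p(v(E))$ has norm exactly $\sum \norm{p(v(E_i))} = \abs{Z(E)}$ and $Q(v(E)) = 0$. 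Making the "hence all $Q(v(E_i)) = 0$" step rigorous — deducing it from non-degeneracy and the fact that $v(E) - \sum_i v(E_i) = 0$ sits in $\Ker Z$ on which $Q$ is negative definite — is the crux, and I would spend the bulk of the write-up there.
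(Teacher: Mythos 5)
The conclusion ``$Q(v(E)) = 0$'' in the statement is a typo in the source: the paper's own proof establishes only $Q(v(E)) \ge 0$, and that is exactly what is used later (the lemma is invoked in contrapositive form, so that a strictly semistable $E$ with $Q(v(E)) < 0$ must have a Jordan--H\"older factor with $Q < 0$). You should not try to prove the equality, because the hypotheses do not imply it. In the normalization $Q(v) = \abs{Z(v)}^2 - \norm{p(v)}^2$ it fails already at the level of linear algebra: take $\Lambda_\R = \R^4$, $Q(a,b,c,d) = a^2+b^2-c^2-d^2$, $Z(a,b,c,d)=a+ib$, $v_1=(1,0,1,0)$, $v_2=(1,0,0,1)$; then $Q(v_1)=Q(v_2)=0$, $Z(v_1),Z(v_2)$ are aligned on the positive real axis, yet $Q(v_1+v_2)=2$. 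You actually sensed this --- ``the one genuinely delicate point is obtaining $Q(v(E)) \le 0$'' --- and the right response to that discomfort is to suspect the statement rather than to keep manufacturing an argument for a claim that nothing in the hypotheses can force.

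For the inequality $Q(v(E)) \ge 0$ that is actually intended, your argument is correct and is a mild variant of the paper's. You polarize $Q$ into the bilinear form $B$, use alignment of the $Z(E_i)$ to get $\Real\bigl(Z(E_i)\overline{Z(E_j)}\bigr) = \abs{Z(E_i)}\abs{Z(E_j)}$, and control the cross terms with Cauchy--Schwarz on $\Ker Z$ together with the hypothesis $\norm{p(v(E_i))} \le \abs{Z(E_i)}$. The paper skips the bilinear form and just writes, in the coordinates of Lemma~\ref{lem:coords},
\[
\abs{Z(E)} = \sum_i \abs{Z(E_i)} \ge \sum_i \norm{p(v(E_i))} \ge \norm{\sum_i p(v(E_i))} = \norm{p(v(E))},
\]
the first equality by alignment of the central charges of the Jordan--H\"older factors and the last inequality by the triangle inequality, which gives $Q(v(E)) \ge 0$ at once. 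Both proofs rest on exactly the same two inputs --- the normalization of Lemma~\ref{lem:coords} and the alignment of the $Z(E_i)$ --- the paper's bookkeeping is simply shorter. Your ``Hodge-index'' refinement $B(x,y) \ge \sqrt{Q(x)Q(y)}$ is true but unnecessary; the bare estimate $B(x,y) \ge 0$ for aligned $x,y$ already suffices.
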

\begin{proof}
We apply Lemma \ref{lem:coords}; then $Q(v) \ge 0$ is equivalent to $\abs{Z(v)} \ge \norm{p(v)}$. We
obtain
\[ \abs{Z(E)} = \sum_i \abs{Z(E_i)} \ge \sum_i \norm{p(v(E_i))} \ge \norm{\sum_i p(v(E_i))} = \norm{p(v(E))} \]
where the first equality holds since the central charges of all $E_i$ are aligned, the first inequality holds by assumption, and the second inequality is the triangle inequality.
\end{proof}

The proof strategy is thus clear: we use wall-crossing for the path of stability functions $Z_t = Z + t\cdot u \circ p$ on $\cA$, for $0 \le t \le 1$. If $E \in \cA$ is $Z_1$-stable with $Q(v(E)) < 0$, then it must be
$Z_0$-unstable; wall-crossing gives a $t \in [0, 1)$ such that $E$ is strictly $Z_t$-semistable; by the Lemma, one of its Jordan-H\"older factors will also violate the inequality, and we proceed by induction. To conclude, we have to show that we can find such a wall, and that this process terminates.

\begin{Lem} \label{lem:duh}
Given two objects $A, E \in \cA$, denote their phases with respect to $Z_t$ by
$\phi^t(A), \phi^t(E)$, respectively. If the set of $t \in [0,1]$ with $\phi^t(A) \ge \phi^t(E)$ is
non-empty, then it is a closed subinterval of $[0,1]$ containing one of its endpoints.
\end{Lem}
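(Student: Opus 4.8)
The plan is to use that along the path $Z_t = Z + t\,u\circ p$ with $u$ \emph{real}-valued (as in Lemma~\ref{lem:onlyreal}), the imaginary part of the central charge is frozen: for every class $v$ we have $\Im Z_t(v) = \Im Z(v)$, independent of $t$, while $\Re Z_t(v) = \Re Z(v) + t\,u(p(v))$ is affine in $t$. First I would record this, and note that since $(\cA, Z_t)$ is a stability function for each $t$ (shown above), for $A, E \in \cA$ both $Z_t(A)$ and $Z_t(E)$ lie in $\H \setminus \{0\}$, so $\phi^t(A), \phi^t(E) \in (0,1]$.

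Next, writing $Z_t(A) = r_A\, e^{i\pi\phi^t(A)}$ and $Z_t(E) = r_E\, e^{i\pi\phi^t(E)}$ with $r_A, r_E > 0$, one computes $\Im\bigl(\overline{Z_t(E)}\,Z_t(A)\bigr) = r_A r_E \sin\bigl(\pi(\phi^t(A) - \phi^t(E))\bigr)$. Since $\phi^t(A) - \phi^t(E) \in (-1,1)$ and $\sin(\pi x) \ge 0 \iff x \ge 0$ on that interval, the condition $\phi^t(A) \ge \phi^t(E)$ is equivalent to $\Im\bigl(\overline{Z_t(E)}\,Z_t(A)\bigr) \ge 0$.

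Finally I would expand
\[ \Im\bigl(\overline{Z_t(E)}\,Z_t(A)\bigr) = \Re Z_t(E)\cdot\Im Z(A) - \Im Z(E)\cdot\Re Z_t(A), \]
which by the first paragraph is an affine function $f(t)$ of $t$. The solution set $\{t \in [0,1] : f(t) \ge 0\}$ of an affine inequality is either empty, all of $[0,1]$, or one of the intervals $[0,c]$, $[c,1]$; hence if it is nonempty it is a closed subinterval of $[0,1]$ containing one of the endpoints, which is the assertion.

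The only subtlety — not really an obstacle — is the boundary of $\H$: if $Z_t$ sends $A$ or $E$ into $\R_{<0}$, the corresponding phase equals $1$, and one should check the equivalence above still holds there. Since all phases remain in $(0,1]$, their difference never reaches $\pm 1$, so the argument of $\sin$ stays strictly inside $(-1,1)$ and no special case is needed; I would just remark on it. Everything else is elementary affine geometry in $\C$.
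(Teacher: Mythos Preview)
Your proof is correct and follows essentially the same approach as the paper: both reduce the phase comparison to a linear (affine) inequality in $t$, and your expression $\Im\bigl(\overline{Z_t(E)}\,Z_t(A)\bigr)\ge 0$ is exactly what one gets by clearing denominators in the paper's slope inequality $\frac{-\Re Z_t(A)}{\Im Z_t(A)} \ge \frac{-\Re Z_t(E)}{\Im Z_t(E)}$. Your version is in fact slightly cleaner, since it sidesteps the division-by-zero issue when one of the imaginary parts vanishes.
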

\begin{proof}
The condition is equivalent to $\frac{-\Re Z_t(A)}{\Im Z_t(A)} \ge \frac{-\Re Z_t(E)}{\Im Z_t(E)}$,
which is a linear inequality in $t$.
\end{proof}

		\begin{wrapfigure}[8]{r}{0.3\textwidth}
  \centering
        \includegraphics{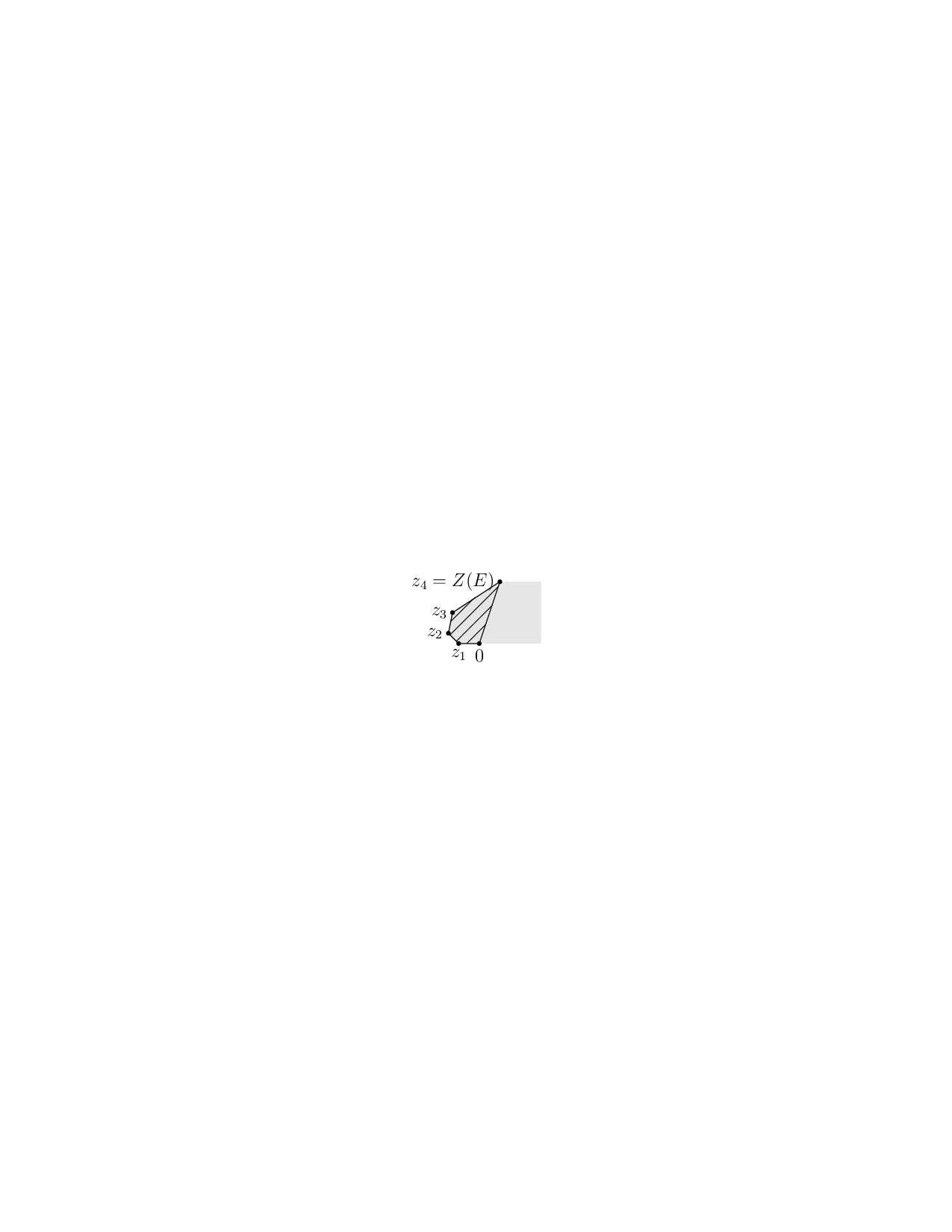}
    \caption{The truncated HN polygon}
    \label{fig:HNpolytruncated}
\end{wrapfigure}
Consider the polygon whose vertices are the extremal points of $\HN^{Z_0}(E)$ on the left; we will call this the
\emph{truncated HN polygon of $E$}, see fig.~\ref{fig:HNpolytruncated}. Note that if $A \subset E$ is a
subobject with $\phi^0(A) \ge \phi^0(E)$, then $Z_0(A)$ is contained in the truncated HN polygon of
$E$; by Lemmas \ref{lem:boundlength} and \ref{lem:boundZ} there are only finitely
many classes $v(A)$ of such subobjects.

\begin{Lem}
Every $Z_1$-semistable object $E \in \cA$ satisfies $Q(E) \ge 0$.
\end{Lem}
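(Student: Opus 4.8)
The plan is to argue by contradiction and induction, exactly along the strategy outlined just before the statement. Suppose $E \in \cA$ is $Z_1$-semistable with $Q(E) < 0$. By the support property for $\sigma_0$, together with Lemma \ref{lem:Qpreserved}, $E$ cannot be $Z_0$-semistable; so let
\[ t_0 = \inf \stv{t \in [0,1]}{E \text{ is } Z_t\text{-semistable}}. \]
First I would argue that this infimum is a genuine minimum, i.e. that the set of $t$ with $E$ $Z_t$-semistable is closed (or at least closed on the left at $t_0$). This is where Lemma \ref{lem:duh} enters: for a fixed subobject $A \subset E$, the condition $\phi^t(A) \ge \phi^t(E)$ is governed by a linear inequality in $t$, hence cuts out a subinterval; and the destabilizing subobjects that matter are constrained, via the truncated HN polygon of $E$ together with Lemmas \ref{lem:boundlength} and \ref{lem:boundZ}, to lie among finitely many classes $v(A)$. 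So $Z_t$-semistability of $E$ is determined by finitely many linear conditions on $t$, giving a finite union of intervals — in particular a closed condition away from measure-zero issues, and one sees that $E$ is strictly $Z_{t_0}$-semistable (it is semistable at $t_0$ but destabilized for $t$ slightly below, so some $\phi^t(A) = \phi^t(E)$ at $t = t_0$ with $A$ a proper subobject of nonzero quotient).

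Next, at $t = t_0$ the object $E$ admits a Jordan–Hölder filtration with respect to $Z_{t_0}$, with factors $E_1, \dots, E_k$. By minimality of $t_0$, these factors each have strictly smaller "complexity" than $E$ in a suitable sense — concretely, each $E_i$ is a subquotient of $E$ whose $Z_0$-class $Z_0(E_i)$ lies in the (compact) truncated HN polygon of $E$, so there are only finitely many possible classes, and one can set up a well-founded induction (e.g. on the number of classes $v(A)$ of subobjects $A \subset E$ with $\Re Z_0(A)$ below the relevant bound, which strictly drops when passing to a proper JH factor). The induction hypothesis then gives $Q(v(E_i)) \ge 0$ for all $i$. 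Applying Lemma \ref{lem:Qpreserved} to the strictly $Z_{t_0}$-semistable object $E$ with these factors yields $Q(v(E)) = 0$, contradicting $Q(E) < 0$. Hence no such $E$ exists and every $Z_1$-semistable object satisfies $Q(E) \ge 0$.

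\textbf{Main obstacle.} The delicate point is not the convex-geometry estimates (those are already packaged in Lemmas \ref{lem:boundZ}, \ref{lem:boundlength}, \ref{lem:subHNpoly}) but rather setting up the induction cleanly: one must (i) verify that the "wall" $t_0$ is attained and that $E$ is genuinely \emph{strictly} semistable there, handling the boundary cases $t_0 = 0$ (where $E$ was already $Z_0$-unstable, so some approximating behaviour is needed) and the possibility that $E$ is $Z_t$-semistable only on a degenerate set; and (ii) produce an induction invariant that strictly decreases on Jordan–Hölder factors. For (ii) the natural choice is the finite count of subobject-classes inside the truncated HN polygon of $E$ guaranteed by the paragraph preceding the statement, so the proof of the Lemma essentially amounts to organizing these already-established finiteness and linearity facts into a terminating descent.
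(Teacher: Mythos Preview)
Your overall strategy matches the paper's: assume $Q(E)<0$, find the wall $t_0$ where $E$ becomes strictly semistable, and use Lemma~\ref{lem:Qpreserved} to locate a Jordan--H\"older factor that still violates $Q\ge 0$. The difficulty, as you correctly identify, is termination---and here your organization has a genuine gap.

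First, the statement you are proving concerns $Z_1$-semistable objects, but the JH factors $E_i$ produced at the wall are $Z_{t_0}$-stable for some $t_0<1$; they need not be $Z_1$-semistable at all. So your ``induction hypothesis'' does not apply to them unless you first strengthen the claim to: for every $t\in[0,1]$, every $Z_t$-semistable object satisfies $Q\ge 0$. You do not say this, and without it the inductive step collapses. Second, even with the stronger statement, your proposed invariant (the number of subobject-classes of $E$ inside its truncated HN polygon) is not an invariant of the subquotient $E_i=F/G$: subobjects of $E_i$ correspond to subobjects $A$ of $E$ with $G\subset A\subset F$, and the classes $v(A)-v(G)$ bear no obvious relation to the count you started with. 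There is no clear reason this number strictly drops.

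The paper avoids both problems by a different bookkeeping device: rather than treating each JH factor as a new object with its own complexity, it picks \emph{one} bad factor $F_1/G_1$ with $Q<0$ and records the pair $G_1\subset F_1$ as subobjects of the original $E$. Iterating produces nested chains $G_1\subset G_2\subset\cdots$ and $\cdots\subset F_2\subset F_1$ inside $E$. The key step (and the one missing from your sketch) is then to show, using Lemma~\ref{lem:duh} inductively, that $\phi^0(F_i)\ge\phi^0(E)$ and $\phi^0(G_i)\ge\phi^0(E)$ for all $i$, so that $Z_0(F_i)$ and $Z_0(G_i)$ all lie in the truncated HN polygon of $E$. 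Since that polygon contains only finitely many classes $v(A)$, and the chains are monotone, the descent terminates. Your instinct to use the truncated HN polygon is right; what is needed is to keep all the data anchored in $E$ rather than to reset to a fresh object at each step.
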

\begin{proof}
Otherwise, $E$ must be $Z_0$-unstable. By Lemma \ref{lem:duh} and the following observation, there are only finitely many classes
$v(A)$ of subobjects $A \into E$ that destabilise $E$ with respect $Z_t$ for any $t\in [0,1]$. Hence
there is a wall $t_1 \in (0, 1]$ such
that $E$ is strictly semistable with respect to $Z_{t_1}$, and moreover $E$ admits a Jordan-H\"older filtration with respect to $Z_{t_1}$. By Lemma 
\ref{lem:Qpreserved}, there are subobjects $G_1 \into F_1 \into E$ of the same phase, such that $F_1/G_1$ is $Z_{t_1}$-\emph{stable} with
$Q(v(F_1/G_1)) < 0$.

Applying the same logic to $F_1/G_1$, we obtain $t_2 \in (0, t_1)$ and subobjects $G_1 \subset G_2 \subset F_2 \subset F_1 \subset E$
such that $F_2/G_1, G_2/G_1$ and $F_1/G_1$ all have the same phase with respect to $t_2$, and such that
$Q(v(F_2/G_2)) < 0$. Continuing by induction, we obtain a sequence $t_1 > t_2 > t_3 > \dots$ in $(0,1)$ and  chains of subobjects $G_1 \subset G_2 \subset G_3 \subset \dots \subset E$ and $E \supset F_1 \supset
F_2 \supset F_3 \supset \dots$

Lemma \ref{lem:duh} gives $\phi^{t_2}(F_1) \ge \phi^{t_2}(E)$ and $\phi^{t_2}(G_1) \ge
\phi^{t_2}(E)$. Since  $Z_{t_2}(F_2)$ lies on the line segment
connecting $Z_{t_2}(F_1)$ and $Z_{t_2}(G_1)$, we also have $\phi^{t_2}(F_2) \ge \phi^{t_2}(E)$ (and
therefore $\phi^{t}(F_2) \ge \phi^t(E)$ for all $t \in [0, t_2]$);
similarly for $G_2$. Continuing
by induction, we conclude $\phi^0(F_i) \ge \phi^0(E)$. This is a contradiction to the
observation above that there are only finitely many classes $v(A)$ of subobjects $A \subset E$ with
$Z_0(A)$ lying in the truncated HN polygon.
\end{proof}

This concludes the proof of Lemma \ref{lem:realvariation}, and thus of Theorem \ref{thm:mainthm} under the
Assumption \ref{ass:nondeg}.

\section{Reductions}

Finally, we will show that we can always reduce the situation to the case where Assumption \ref{ass:nondeg} holds. 
By abuse of language, we call a quadratic form degenerate or non-degenerate if the associated symmetric bilinear form
is degenerate or non-degenerate, respectively.

\begin{Lem}
Assume that the quadratic form $Q$ on $\Lambda_\R$ is degenerate. Then there exists an injective map $\Lambda_\R \into \overline{\Lambda}$ of real vector spaces and a non-degenerate quadratic form $\overline{Q}$ on $\overline{\Lambda}$, extending $Q$, such that any central charge
$Z \colon \Lambda_\R \to \C$ with kernel negative definite with respect to $Q$ extends to a central charge
$\overline{Z} \colon \overline{\Lambda} \to \C$ with kernel negative definite with respect to $\overline{Q}$.
\end{Lem}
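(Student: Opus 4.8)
The plan is to write $Q$ in normal form and separate its radical from a non-degenerate part. Since $Q$ is degenerate, let $R := \Ker(Q) \subset \Lambda_\R$ be its radical (the kernel of the associated symmetric bilinear form), and choose a complement $W$ so that $\Lambda_\R = R \oplus W$ with $Q|_W$ non-degenerate and $Q|_R = 0$. Because any central charge $Z$ whose kernel is negative definite with respect to $Q$ must, in particular, have $\Ker Z \cap R = 0$ (as $Q$ vanishes on $R$, hence is \emph{not} negative definite on any nonzero subspace of $R$), the restriction $Z|_R \colon R \to \C$ is injective; in particular $\dim_\R R \le 2$. This dimension count is what makes the construction possible: we only ever need to add boundedly many dimensions.

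Next I would build $\overline{\Lambda}$ and $\overline{Q}$. Set $\overline{\Lambda} := R \oplus R^\vee \oplus W$ (or, if $\dim R = 1$, one may instead take $\overline{\Lambda} := R \oplus R' \oplus W$ with $R'$ a formal copy of $R$), and define $\overline{Q}$ to be $Q|_W$ on the $W$-summand together with the hyperbolic form $\langle (r, \lambda), (r', \lambda')\rangle = \lambda(r') + \lambda'(r)$ on $R \oplus R^\vee$. This $\overline{Q}$ is non-degenerate, it restricts to $Q$ on $\Lambda_\R = R \oplus 0 \oplus W \hookrightarrow \overline{\Lambda}$, and the hyperbolic summand contributes signature $(\dim R, \dim R)$; so if $\dim R = 2$ we get exactly $\overline{Q}$ non-degenerate and we can arrange signature $(2, \rk\overline{\Lambda} - 2)$ after recording that $Q|_W$ had signature $(2 - \dim R, \cdot)$. (One should state the signature claim carefully so that Assumption \ref{ass:nondeg} holds for $\overline{Q}$; if the original $Q$ has signature with fewer than two positive squares the theorem is vacuous since then no $Z$ has negative-definite kernel, so we may assume $Q$ has at least the two positive directions needed.)

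Then I would extend $Z$. Given $Z$ with $\Ker Z$ negative definite for $Q$, define $\overline{Z} \colon \overline{\Lambda} = R \oplus R^\vee \oplus W \to \C$ by keeping $\overline{Z}|_{R \oplus W} = Z|_{R \oplus W}$ and choosing $\overline{Z}|_{R^\vee}$ to be, say, zero (or any small linear map). The point to verify is that $\Ker\overline{Z}$ is negative definite with respect to $\overline{Q}$. Write $K = \Ker Z \subset \Lambda_\R$; then $\Ker \overline{Z} = K \oplus R^\vee$ when $\overline{Z}|_{R^\vee} = 0$. This is \emph{not} automatically negative definite — the hyperbolic summand has a positive direction. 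The fix is to make $\overline{Z}|_{R^\vee}$ injective instead, so that $\Ker\overline{Z} \subset R \oplus R^\vee \oplus W$ projects injectively to $W$ via the projection killing $R \oplus R^\vee$... which again need not work directly. So I expect the \textbf{main obstacle} to be precisely the bookkeeping that guarantees $\Ker \overline{Z}$ stays negative definite; the clean resolution is to \emph{choose the splitting so that $R^\vee$ is glued on orthogonally to an everywhere-negative-definite extension}, i.e. perturb the form rather than insisting on the naive product. Concretely: since $\Ker Z$ is negative definite and $\dim R \le 2$ is small, one enlarges $\Lambda_\R$ by $\dim R$ new dimensions, declares $\overline{Q}$ on those new dimensions to be \emph{negative definite}, and extends $Z$ to send the new basis vectors to $0$; then $\Ker \overline{Z} = \Ker Z \oplus (\text{new space})$ is negative definite for $\overline{Q}$ by construction, and the only remaining task is to check that $\overline{Q}$ is non-degenerate on the whole $\overline{\Lambda}$ — which one arranges by pairing up the radical $R$ of $Q$ with the new negative-definite directions via a small off-diagonal term, chosen small enough to preserve both non-degeneracy and the negative-definiteness of $\Ker\overline{Z}$ (an open condition). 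I would carry out exactly this: (i) identify $R$ and note $\dim R \le 2$; (ii) set $\overline{\Lambda} = \Lambda_\R \oplus \R^{\dim R}$; (iii) put on $\overline{\Lambda}$ the form $\overline{Q} = Q \oplus (-\mathrm{Id}) + \epsilon(\text{pairing } R \text{ with } \R^{\dim R})$; (iv) check non-degeneracy of $\overline{Q}$ and, for small $\epsilon$, negative-definiteness of $\Ker\overline{Z} = \Ker Z \oplus \R^{\dim R}$; (v) extend $Z$ by zero on $\R^{\dim R}$. This gives the injection $\Lambda_\R \hookrightarrow \overline{\Lambda}$, the form $\overline{Q}$, and the extension $\overline{Z}$ as required, after which Theorem \ref{thm:mainthm} for $(\overline{\Lambda}, \overline{Q})$ — now satisfying Assumption \ref{ass:nondeg} — is pulled back along the inclusion.
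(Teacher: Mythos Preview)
Your final construction has a genuine gap: the ``small enough $\epsilon$'' in step (iv) depends on $Z$, but the lemma requires $\overline{\Lambda}$ and $\overline{Q}$ to be fixed once and for all, with only $\overline{Z}$ allowed to depend on $Z$. Concretely, take $\dim R = 1$ with basis $e_1$, and suppose $e_2 \in W$ with $Q(e_2) = -1$. For each $n$ there is a central charge $Z_n$ with $n e_1 + e_2 \in \Ker Z_n$; since $Q(n e_1 + e_2) = Q(e_2) = -1$, this kernel direction is negative definite. With your form $\overline{Q} = Q \oplus (-1) + \epsilon B$ and $\overline{Z_n}$ extended by zero, the vector $(n e_1 + e_2) + t f \in \Ker\overline{Z_n}$ (where $f$ spans the new $\R$) has
\[
\overline{Q}\bigl((n e_1 + e_2) + t f\bigr) = -1 - t^2 + 2\epsilon n t,
\]
which is positive at $t = \epsilon n$ as soon as $n > 1/\epsilon$. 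So no fixed $\epsilon > 0$ works for all $Z$, while $\epsilon = 0$ leaves $\overline{Q}$ degenerate.

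Your first instinct---the hyperbolic extension $\overline{\Lambda} = R \oplus R^\vee \oplus W$ with $\overline{Q} = (\text{hyperbolic on } R \oplus R^\vee) \oplus Q|_W$---is exactly what the paper does, and you abandoned it too quickly. The point you missed is that the freedom should go into $\overline{Z}$, not into $\overline{Q}$: with the hyperbolic $\overline{Q}$ fixed, one sends the dual vector $n^\vee \in R^\vee$ to a \emph{large} real number $\alpha$ (after rotating so that $Z(R) \subset \R$). A short computation then shows $\Ker\overline{Z}$ is negative definite once $\alpha$ exceeds a bound depending on $\Ker Z$; since $\overline{Z}$ is allowed to depend on $Z$, this is legitimate. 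The slogan is: put the $Z$-dependent parameter into the central charge, not into the quadratic form.
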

\begin{proof}
Let $N \into \Lambda_\R$ be the null space of $Q$; we will only treat the case $\dim_\R N = 1$
(otherwise, we can iterate the construction that follows). Choose a splitting $\Lambda_\R \cong N
\oplus C$; then for $n \in N, c \in C$, we have $Q(n \oplus c) = Q(c)$. Let
$\overline{\Lambda_\R} := N \oplus N^\vee \oplus C$, let $q$ be the canonical quadratic form on
the hyperbolic plane $N \oplus N^\vee$, and set $\overline{Q} := q \oplus Q|_C$. 

Given $Z$ as above, the restriction $Z|_N$ is  injective, and we may
assume that $Z$ maps $N$ to the real line. Let $n \in N$ be such that $Z(n) = 1$, and let
$n^\vee \in N^\vee$ be the dual vector with $(n, n^\vee) = 1$. We claim that for $\alpha \gg 0$,
the extension of $Z$ defined by $Z'(n^\vee) = \alpha$ has the desired property.

Let $K := \Ker Z$; then the kernel of $Z'$ is contained in $N \oplus N^\vee \oplus K$, and
given by vectors of the form $a \cdot n - \frac{a}{\alpha}\cdot n^\vee + k$ for $k \in K, a \in \R$.
For such vectors, we have
\[
 Q\left(a\cdot n  - \frac{a}{\alpha}\cdot n^\vee + k \right)
= - \frac{2a^2}{\alpha} - \frac{2a}{\alpha}(n^\vee, k) + Q(k).
\]
This is a quadratic function in $a$ with negative constant term; its discriminant is negative if
\[ \alpha > \max \stv{\frac{(n^\vee, k)^2}{-Q(k)}}{k \in K, k \neq 0} \]
(which is finite since $-Q(\cdot)$ is a positive definite form on $K$).
\end{proof}

Replacing $\Lambda$ by $\Lambda \oplus \Z$ and $v$ by 
\[ K(\cD) \xrightarrow{v} \Lambda \into \Lambda \oplus \Z \]
we can therefore restrict to the case where $Q$ is non-degenerate: given a path $Z_t$ of central
charges in $\Hom(\Lambda_\R, \C)$ that are negative definite with respect to $Q$, we can choose
extensions $\overline{Z}_t$ as in the Lemma that form a continuous path in $\Hom(\overline{\Lambda}, \C)$.
If we can lift the latter path to a path of stability conditions
$\overline{\sigma}_t = (\overline{Z}_t, \cP_t)$ that satisfy the support property 
with respect to $\overline{Q}$, then $\sigma_t := (Z_t, \cP_t)$ is a
path of stability conditions satisfying the support property with respect to $Q$.
The reduction to the case where $Q$ has signature
$(2, \rk \Lambda -2)$ works similarly:

\begin{Lem} Assume that $Q$ is non-degenerate and of signature $(p, \rk \Lambda -p)$ for $p \in \{
0, 1\}$. 
Let $\overline{\Lambda} := \Lambda_\R \oplus \R$, and let $\overline{Q}$ be the extension given by
$\overline{Q}(v, \alpha) = Q(v) + \alpha^2$ for $v \in \Lambda_\R$ and $\alpha \in \R$. 
Then any central charge $Z$ on $\Lambda_\R$ whose kernel is negative definite with respect to $Q$ extends to
a central charge $\overline{Z}$ on $\overline{\Lambda}$ whose kernel is negative definite with respect to
$\overline{Q}$. 
\end{Lem}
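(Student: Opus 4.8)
The plan is to extend $Z$ to $\overline Z\colon \overline\Lambda = \Lambda_\R\oplus\R \to \C$ by prescribing a single value $\beta := \overline Z(0,1)\in\C$, so that $\overline Z(v,\alpha) = Z(v) + \alpha\beta$, and then to choose $\beta$ so that $\overline Q$ is negative definite on $\Ker\overline Z$. Write $K := \Ker Z$. Since $Q|_K$ is negative definite it is in particular non-degenerate, so $\Lambda_\R = K\oplus K^\perp$ is an orthogonal direct sum for $Q$; set $d := \dim_\R K^\perp$. Because $Z|_{K^\perp}$ is injective with target $\C$ we have $d\le 2$, and $Z(\Lambda_\R) = Z(K^\perp)$ has real dimension $d$.

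First I would treat the easy case $d\le 1$, where $L := Z(\Lambda_\R)$ is a proper $\R$-subspace of $\C$. Then I pick any $\beta\in\C\setminus L$; since $\R\beta\cap L = \{0\}$, the relation $Z(v) + \alpha\beta = 0$ forces both $Z(v) = 0$ and $\alpha = 0$, so $\Ker\overline Z = K\oplus\{0\}$, and $\overline Q$ restricted to it is just $Q|_K$, which is negative definite.

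The substantive case is $d = 2$, where $Z|_{K^\perp}\colon K^\perp\xrightarrow{\ \sim\ }\C$. By additivity of signatures, $Q|_{K^\perp}$ has signature $(p,2-p)$, and since $2-p\ge 1$ it has a negative direction; rescaling, I can pick $w_0\in K^\perp$ with $Q(w_0) < -1$, and set $\beta := Z(w_0)\neq 0$. One checks that $Z^{-1}(\R\beta) = K\oplus\R w_0$, hence $\Ker\overline Z = \{(k + sw_0,\,-s)\;:\;k\in K,\ s\in\R\}$; and because $w_0\perp K$,
\[ \overline Q\bigl(k + sw_0,\,-s\bigr) \;=\; Q(k) + s^2Q(w_0) + s^2 \;=\; Q(k) + s^2\bigl(Q(w_0)+1\bigr), \]
which is strictly negative for $(k,s)\neq(0,0)$ since $Q(k)<0$ for $k\neq0$ and $Q(w_0)+1<0$. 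This produces the required $\overline Z$.

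The only point that needs genuine care — and the reason for the case split — is the choice of $\beta$: the naive choice $\beta = 0$ would make $\overline Q|_{\Ker\overline Z}$ equal to $Q|_K$ together with an extra positive-definite summand, which is not negative definite, so one must exploit either the room left in $\C$ (when $d\le 1$) or the negative direction inside $K^\perp$ (when $d = 2$). Everything else is the signature bookkeeping and the elementary estimate $Q(w_0) < -1$, both routine. (As in the degenerate case treated just before, after lifting paths of stability conditions against $\overline Q$ one restricts back to $\Lambda_\R$; applying the lemma once or twice reduces any $p\le 1$ to Assumption \ref{ass:nondeg}.)
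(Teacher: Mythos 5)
Your proof is correct and follows essentially the same strategy as the paper's: extend $Z$ by choosing $\overline Z(0,1)=\beta$, splitting $\Lambda_\R=K\oplus K^\perp$ orthogonally for $Q$, and picking $\beta$ either off the line $Z(\Lambda_\R)$ (when $\dim K^\perp\le 1$) or as $Z(w_0)$ for some $w_0\in K^\perp$ with $Q(w_0)<-1$ (when $\dim K^\perp=2$, using that the signature $(p,2-p)$ with $p\le 1$ guarantees a negative direction). You simply make the case split and the computation of $\Ker\overline Z$ more explicit than the paper, which compresses this into a single ``claim'' and its one-line verification.
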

\begin{proof}
We claim that there exists $z \in \C$ such that for all $v \in \Lambda_\R$ with $Z(v) = z$, we have
$Q(v) < -1$. Indeed, 
let $K \subset \Lambda_\R$ be the kernel of $Z$, and let $K^\perp$ be its orthogonal complement. Then clearly
we may assume $v \in K^\perp$. Since the restriction of $Z$ to $K^\perp$ is injective, and since
$K^\perp$ either has rank one, or has signature $(1, -1)$ with respect to $Q$, the claim is evident.

Using the claim, we can set $\overline Z(v, \alpha) := Z(v) + \alpha z$. 
\end{proof}
This concludes the proof of Theorem \ref{thm:mainthm}. 

\section{Application} \label{sec:application}

\begin{proof}[Proof of Corollary \ref{cor:P0covering}]
Using the same arguments as in the previous section, we may assume that the Mukai pairing
on $\Lambda$ has signature $(2, \rk \Lambda -2)$. 

By Serre duality, any $\sigma$-stable object $E \in \cD$ satisfies $\Hom(E, E[i]) = 0$ for
$i < 0$ or $i > 3$ and $\Hom(E, E) = \C = \Hom(E, E[2])$; therefore, $(v(E), v(E)) \ge -2$.
Moreover, Serre duality induces a non-degenerate symplectic form on $\Ext^1(E, E)$, and it
has even dimension; thus $v(E)$ is a root, or $(v(E), v(E)) \ge 0$.

Let $\sigma = (Z, \cP)$ be a stability condition with $Z \in \cP_0(\cD)$. As in 
Lemma \ref{lem:coords} we may assume
\[ (v, v) = \abs{Z(v)}^2 - \norm{p(v)}^2, \]
where $p \colon \Lambda_\R \to \Ker Z$ is the orthogonal projection onto the kernel of $Z$, and
where $\norm{\cdot}$ denotes the norm on $\Ker Z$ induced by the negative of the Mukai pairing.
We claim that  
\begin{equation} 
C:= \inf \stv{\abs{Z(\delta)}}{\delta \in \Lambda, (\delta, \delta) = -2} > 0.
\end{equation}
Indeed, if $\abs{Z(\delta)} \le 1$, then $\norm{p(\delta)} \le \sqrt{3}$;
as $\abs{Z(\cdot)} + \norm{p(\cdot)}$ is a norm on $\Lambda_\R$,
there are only finitely many integral classes satisfying both inequalities.
Since $Z(\delta) \neq 0$ by assumption, the claim follows.

Now set 
\[ Q(v) := (v, v) + \frac 2{C^2} \abs{Z(v)}^2. \]
Clearly $Q$ is negative definite on $\Ker Z$ and depends only on $Z$. Moreover, $Q(\delta) \ge 0$ for all roots $\delta$,
and $Q(v) \ge 0$ for all classes with $(v, v) \ge 0$; therefore, any stability condition $\sigma' = (Z', \cP')$ with 
$Z' \in \cP_Z(Q)$ satisfies the support property with respect to $Q$.

Theorem \ref{thm:mainthm} shows that the restriction of $\cZ$ to the preimage of $\cP_Z(Q)$ is a covering of $\cP_Z(Q)$. Since the neighbourhoods $\cP_Z(Q)$ cover $\cP_0(\cD)$, this completes the proof.
\end{proof}

\bibliography{all}                      

\begin{thebibliography}{ABCH13}

\bibitem[ABCH13]{ABCH:MMP}
Daniele Arcara, Aaron Bertram, Izzet Coskun, and Jack Huizenga.
\newblock The minimal model program for the {H}ilbert scheme of points on
  {$\mathbb{P}^2$} and {B}ridgeland stability.
\newblock {\em Adv. Math.}, 235:580--626, 2013, arXiv:1203.0316.

\bibitem[ABM15]{Anno-Bezrukavnikov-Mirkovic:stability}
Rina Anno, Roman Bezrukavnikov, and Ivan Mirkovi{\'c}.
\newblock Stability conditions for {S}lodowy slices and real variations of
  stability.
\newblock {\em Mosc. Math. J.}, 15(2):187--203, 403, 2015, arXiv:1108.1563.

\bibitem[Bay18]{wallcrossing-BrillNoether}
Arend Bayer.
\newblock Wall-crossing implies {B}rill-{N}oether: applications of stability
  conditions on surfaces.
\newblock In {\em Algebraic geometry: {S}alt {L}ake {C}ity 2015}, 
  {\em Proc. Symp. Pure Math.}, 97:3--27 (2018). arXiv:1604.08261.

\bibitem[BB17]{K3Pic1}
Arend Bayer and Tom Bridgeland.
\newblock Derived automorphism groups of {K}3 surfaces of {P}icard rank 1.
\newblock {\em Duke Math. J.}, 166(1):75--124, 2017, arXiv:1310.8266.

\bibitem[BM11]{localP2}
Arend Bayer and Emanuele Macr{\`{\i}}.
\newblock The space of stability conditions on the local projective plane.
\newblock {\em Duke Math. J.}, 160(2):263--322, 2011, arXiv:0912.0043.

\bibitem[BM14]{BM:walls}
Arend Bayer and Emanuele Macr{\`{\i}}.
\newblock {M}{M}{P} for moduli of sheaves on {K}3s via wall-crossing: nef and
  movable cones, {L}agrangian fibrations.
\newblock {\em Invent. Math.}, 198(3):505--590, 2014, arXiv:1301.6968.

\bibitem[BMS16]{BMS:stabCY3s}
Arend Bayer, Emanuele Macr{\`{\i}}, and Paolo Stellari.
\newblock The space of stability conditions on abelian threefolds, and on some
  {C}alabi-{Y}au threefolds.
\newblock {\em Invent. math.}, pages 1--65, 2016, arXiv:1410.1585.

\bibitem[BMT14]{BMT:3folds-BG}
Arend Bayer, Emanuele Macr{\`{\i}}, and Yukinobu Toda.
\newblock Bridgeland stability conditions on threefolds {I}:
  {B}ogomolov-{G}ieseker type inequalities.
\newblock {\em J. Algebraic Geom.}, 23(1):117--163, 2014, arXiv:1103.5010.

\bibitem[Bri07]{Bridgeland:Stab}
Tom Bridgeland.
\newblock Stability conditions on triangulated categories.
\newblock {\em Ann. of Math. (2)}, 166(2):317--345, 2007, arXiv:math/0212237.

\bibitem[Bri08]{Bridgeland:K3}
Tom Bridgeland.
\newblock Stability conditions on {$K3$} surfaces.
\newblock {\em Duke Math. J.}, 141(2):241--291, 2008, arXiv:math/0307164.

\bibitem[Bri09a]{Bridgeland:spaces}
Tom Bridgeland.
\newblock Spaces of stability conditions.
\newblock In {\em Algebraic geometry---{S}eattle 2005. {P}art 1}, volume~80 of
  {\em Proc. Sympos. Pure Math.}, pages 1--21. AMS, 2009, arXiv:math.AG/0611510.

\bibitem[Bri09b]{Bridgeland:ADE}
Tom Bridgeland.
\newblock Stability conditions and {K}leinian singularities.
\newblock {\em Int. Math. Res. Not. IMRN}, 21(21):4142--4157, 2009,
  arXiv:math/0508257.

\bibitem[BS15]{Tom-Ivan:quadratic}
Tom Bridgeland and Ivan Smith.
\newblock Quadratic differentials as stability conditions.
\newblock {\em Publ. Math. Inst. Hautes \'Etudes Sci.}, 121:155--278, 2015,
  arXiv:1302.7030.

\bibitem[GMN13]{GMN:WKB}
Davide Gaiotto, Gregory~W. Moore, and Andrew Neitzke.
\newblock Wall-crossing, {H}itchin systems, and the {WKB} approximation.
\newblock {\em Adv. Math.}, 234:239--403, 2013, arXiv:0907.3987.

\bibitem[Huy14]{Daniel:intro-stability}
D.~Huybrechts.
\newblock Introduction to stability conditions.
\newblock In {\em Moduli spaces}, volume 411 of {\em London Math. Soc. Lecture
  Note Ser.}, pages 179--229. Cambridge Univ. Press, Cambridge, 2014,
  arXiv:1111.1745.

\bibitem[Ike14]{Ikeda:stability-preprojective}
Akishi Ikeda.
\newblock Stability conditions for preprojective algebras and root systems of
  {K}ac-{M}oody {L}ie algebras, 2014, arXiv:1402.1392.

\bibitem[KS08]{Kontsevich-Soibelman:stability}
Maxim Kontsevich and Yan Soibelman.
\newblock Stability structures, motivic {D}onaldson-{T}homas invariants and
  cluster transformations, 2008, arXiv:0811.2435.

\bibitem[LZ19]{Chunyi-Xiaolei:birational}
Chunyi Li and Xiaolei Zhao.
\newblock Birational models of moduli spaces of coherent sheaves on the
  projective plane.
\newblock {\em Geom. Topol.}, 23(1):347--426, 2019, arXiv:1603.05035.

\bibitem[MP14]{Sven-Holger:quotcategories}
Sven Meinhardt and Holger Partsch.
\newblock Quotient categories, stability conditions, and birational geometry.
\newblock {\em Geom. Dedicata}, 173:365--392, 2014, arXiv:0805.0492.

\bibitem[MS17]{Emolo-Benjamin:lecture-notes}
Emanuele Macr{\`\i} and Benjamin Schmidt.
\newblock Lectures on {B}ridgeland stability.
\newblock In {\em Moduli of curves}, volume~21 of {\em Lect. Notes Unione Mat.
  Ital.}, pages 139--211. Springer, Cham, 2017, arXiv:1607.01262.

\bibitem[Sha76]{Shatz:Degeneration}
Stephen~S. Shatz.
\newblock Degeneration and specialization in algebraic families of vector
  bundles.
\newblock {\em Bull. Amer. Math. Soc.}, 82(4):560--562, 1976.

\bibitem[Tho08]{Thomas:stability}
R.~P. Thomas.
\newblock Stability conditions and the braid group.
\newblock In {\em Superstring theory}, volume~1 of {\em Adv. Lect. Math.
  (ALM)}, pages 209--233. Int. Press, Somerville, MA, 2008, arXiv:math/0212214.

\bibitem[Tod14]{Yukinobu:DTsurvey}
Yukinobu Toda.
\newblock Derived category of coherent sheaves and counting invariants.
\newblock In {\em Proceedings of the {I}nternational {C}ongress of
  {M}athematicians---{S}eoul 2014. {V}ol. {II}}, pages 745--767,  2014, arXiv:1404.3814.

\end{thebibliography}
\bibliographystyle{halpha}     

\end{document}